\let\th@plain\relax
\pgfplotsset{compat=newest}
\DeclareFontFamily{U}{MnSymbolA}{}
\DeclareFontShape{U}{MnSymbolA}{m}{n}{
    <-6> MnSymbolA5
    <6-7> MnSymbolA6
    <7-8> MnSymbolA7
    <8-9> MnSymbolA8
    <9-10> MnSymbolA9
    <10-12> MnSymbolA10
    <12-> MnSymbolA12
}{}
\DeclareFontShape{U}{MnSymbolA}{b}{n}{
    <-6> MnSymbolA-Bold5
    <6-7> MnSymbolA-Bold6
    <7-8> MnSymbolA-Bold7
    <8-9> MnSymbolA-Bold8
    <9-10> MnSymbolA-Bold9
    <10-12> MnSymbolA-Bold10
    <12-> MnSymbolA-Bold12
}{}
\DeclareSymbolFont{MnSyA}{U}{MnSymbolA}{m}{n}
\DeclareMathSymbol{\lcirclearrowright}{\mathrel}{MnSyA}{252}
\DeclareMathSymbol{\lcirclearrowdown}{\mathrel}{MnSyA}{255}
\DeclareMathSymbol{\rcirclearrowleft}{\mathrel}{MnSyA}{250}
\DeclareMathSymbol{\rcirclearrowdown}{\mathrel}{MnSyA}{251}
\DeclareFontFamily{U}{MnSymbolC}{}
\DeclareSymbolFont{MnSyC}{U}{MnSymbolC}{m}{n}
\DeclareFontShape{U}{MnSymbolC}{m}{n}{
    <-6>  MnSymbolC5
    <6-7>  MnSymbolC6
    <7-8>  MnSymbolC7
    <8-9>  MnSymbolC8
    <9-10> MnSymbolC9
    <10-12> MnSymbolC10
    <12->   MnSymbolC12%
}{}
\DeclareMathSymbol{\powerset}{\mathord}{MnSyC}{180}
\DeclareMathSymbol{\righthalfcap}{\mathbin}{MnSyC}{186}
\DeclareMathAlphabet{\mathpzc}{OT1}{pzc}{m}{it}
\def\boolwahr{true}
\def\boolfalsch{false}
\def\boolleer{}
\let\boolinappendix\boolfalsch
\let\boolinmdframed\boolfalsch
\let\eqtagset\boolfalsch
\let\eqtaglabel\boolleer
\let\eqtagsymb\boolleer
\newlength\rtab
\newlength\gesamtlinkerRand
\newlength\gesamtrechterRand
\newlength\ownspaceabovethm
\newlength\ownspacebelowthm
\def\secnumberingpt{.}
\def\secnumberingseppt{.}
\def\subsecnumberingseppt{}
\def\thmnumberingpt{.}
\def\thmnumberingseppt{}
\def\thmForceSepPt{.}
\definecolor{leer}{gray}{1}
\definecolor{boxgrau}{gray}{0.85}
\definecolor{dunkelgrau}{gray}{0.5}
\definecolor{maroon}{rgb}{0.6901961,0.1882353,0.3764706}
\definecolor{dunkelgruen}{rgb}{0.015625,0.363281,0.109375}
\definecolor{dunkelrot}{rgb}{0.5450980392,0,0}
\definecolor{dunkelblau}{rgb}{0,0,0.5450980392}
\definecolor{blau}{rgb}{0,0,1}
\definecolor{newresult}{rgb}{0.6,0.6,0.6}
\definecolor{improvedresult}{rgb}{0.9,0.9,0.9}
\definecolor{hervorheben}{rgb}{0,0.9,0.7}
\definecolor{starkesblau}{rgb}{0.1019607843,0.3176470588,0.8156862745}
\definecolor{achtung}{rgb}{1,0.5,0.5}
\definecolor{frage}{rgb}{0.5,1,0.5}
\definecolor{schreibweise}{rgb}{0,0.7,0.9}
\definecolor{axiom}{rgb}{0,0.3,0.3}
\def\let@name#1#2{
    \expandafter\let\csname #1\expandafter\endcsname\csname #2\endcsname\relax
}
\DeclareRobustCommand\crfamily{\fontfamily{ccr}\selectfont}
\DeclareTextFontCommand{\textcr}{\crfamily}
\def\ifthenelseleer#1#2#3{\ifthenelse{\equal{#1}{}}{#2}{#1#3}}
\def\bedingtesspaceexpand#1#2#3{\ifthenelseleer{\csname #1\endcsname}{#3}{#2#3}}
\def\nvraum{\@ifnextchar\bgroup{\nvraum@c}{\nvraum@bes}}
    \def\nvraum@c#1{\vspace*{-#1\baselineskip}}
    \def\nvraum@bes{\vspace*{-\baselineskip}}
\def\erlaubeplatz{\relax\ifmmode\else\@\xspace\fi}
\def\entferneplatz{\relax\ifmmode\else\expandafter\@gobble\fi}
\def\send@toaux#1{\@bsphack\protected@write\@auxout{}{\string#1}\@esphack}
\def\rlabel#1[#2]#3#4#5{#5\rlabel@aux{#1}[#2]{#3}{#4}{#5}}
    \def\rlabel@aux#1[#2]#3#4#5{%
        \send@toaux{\newlabel{#1}{{\@currentlabel}{\thepage}{{\unexpanded{#5}}}{#2.\csname the#2\endcsname}{}}}\relax%
    }
\def\tag@rawscheme#1#2[#3]#4#5{\@ifnextchar[{\tag@rawscheme@{#1}{#2}[#3]{#4}{#5}}{\tag@rawscheme@{#1}{#2}[#3]{#4}{#5}[*]}}
    \def\tag@rawscheme@#1#2[#3]#4#5[#6]{\@ifnextchar\bgroup{\tag@rawscheme@@{#1}{#2}[#3]{#4}{#5}[#6]}{\tag@rawscheme@@{#1}{#2}[#3]{#4}{#5}[#6]{}}}
    \def\tag@rawscheme@@#1#2[#3]#4#5[#6]#7{%
        \ifthenelse{\equal{#6}{*}}{%
            \ifthenelse{\equal{#7}{\boolleer}}{\refstepcounter{#3}#4\csname the#3\endcsname#5}{#4#7#5}%
        }{%
            \refstepcounter{#3}#4%
            \ifthenelse{\equal{#7}{\boolleer}}{\rlabel{#6}[#3]{#1}{#2}{\csname the#3\endcsname}}{\rlabel{#6}[#3]{#1}{#2}{#7}}%
            #5%
        }%
    }
\def\tag@scheme#1#2[#3]{\tag@rawscheme{#1}{#2}[#3]{\upshape(}{\upshape)}}
\def\eqtag@post#1{\makebox[0pt][r]{#1}}
\def\eqtag@pre{\tag@scheme{Eq}{Equation}[Xe]}
\def\eqtag{\@ifnextchar[{\eqtag@}{\eqtag@[*]}}
    \def\eqtag@[#1]{\@ifnextchar\bgroup{\eqtag@@[#1]}{\eqtag@@[#1]{}}}
    \def\eqtag@@[#1]#2{\eqtag@post{\eqtag@pre[#1]{#2}}}
\def\eqcref#1{\text{(\ref{#1})}}
\def\punktcref#1{\eqcref{it:#1:\beweislabel}}
\def\opfromto[#1]_#2^#3{\underset{#2}{\overset{#3}{#1}}}
\def\textoverset#1#2{\overset{\text{#1}}{#2}}
\def\eqcrefoverset#1#2{\textoverset{\eqcref{#1}}{#2}}
\def\mathclap#1{#1}
\def\oberunterset#1{\@ifnextchar^{\oberunterset@oben{#1}}{\oberunterset@unten{#1}}}
    \def\oberunterset@oben#1^#2_#3{\underset{\mathclap{#3}}{\overset{\mathclap{#2}}{#1}}}
    \def\oberunterset@unten#1_#2^#3{\underset{\mathclap{#2}}{\overset{\mathclap{#3}}{#1}}}
    \def\breitunderbrace#1_#2{\underbrace{#1}_{\mathclap{#2}}}
    \def\breitoverbrace#1^#2{\overbrace{#1}^{\mathclap{#2}}}
    \def\breitunderbracket#1_#2{\underbracket{#1}_{\mathclap{#2}}}
    \def\breitoverbracket#1^#2{\overbracket{#1}^{\mathclap{#2}}}
\def\generatenestedsecnumbering#1#2#3{%
    \expandafter\gdef\csname thelong#3\endcsname{%
        \expandafter\csname the#2\endcsname%
        \secnumberingpt%
        \expandafter\csname #1\endcsname{#3}%
    }%
    \expandafter\gdef\csname theshort#3\endcsname{%
        \expandafter\csname #1\endcsname{#3}%
    }%
}
\def\generatenestedthmnumbering#1#2#3{%
    \expandafter\gdef\csname the#3\endcsname{%
        \expandafter\csname the#2\endcsname%
        \thmnumberingpt%
        \expandafter\csname #1\endcsname{#3}%
    }%
    \expandafter\gdef\csname theshort#3\endcsname{%
        \expandafter\csname #1\endcsname{#3}%
    }%
}
\def\+#1{\addtocounter{#1}{1}}
\def\setcounternach#1#2{\setcounter{#1}{#2}\addtocounter{#1}{-1}}
\def\forcepunkt#1{#1\IfEndWith{#1}{.}{}{.}}
\def\lateinabkuerzung#1#2{%
    \expandafter\gdef\csname #1\endcsname{\emph{#2}\@ifnextchar.{\entferneplatz}{\erlaubeplatz}}
}
\def\deutscheabkuerzung#1#2{%
    \expandafter\gdef\csname #1\endcsname{{#2}\@ifnextchar.{\entferneplatz}{\erlaubeplatz}}
}
\def\matrix#1{\left(\begin{array}{#1}}
    \def\endmatrix{\end{array}\right)}
\def\smatrix{\left(\begin{smallmatrix}}
    \def\endsmatrix{\end{smallmatrix}\right)}
\def\multiargrekursiverbefehl#1#2#3#4#5#6#7#8{%
    \expandafter\gdef\csname#1\endcsname #2##1#4{\csname #1@anfang\endcsname##1#3\egroup}
    \expandafter\def\csname #1@anfang\endcsname##1#3{#5##1\@ifnextchar\egroup{\csname #1@ende\endcsname}{#7\csname #1@mitte\endcsname}}
    \expandafter\def\csname #1@mitte\endcsname##1#3{#6##1\@ifnextchar\egroup{\csname #1@ende\endcsname}{#7\csname #1@mitte\endcsname}}
    \expandafter\def\csname #1@ende\endcsname##1{#8}
}
\def\faelle[#1]#2{\left\{\begin{array}[#1]{#2}}
    \def\endfaelle{\end{array}\right.}
\def\BeweisRichtung[#1]{\@ifnextchar\bgroup{\@BeweisRichtung@c[#1]}{\@BeweisRichtung@bes[#1]}}
    \def\@BeweisRichtung@bes[#1]{{\bfseries(#1).~}}
    \def\@BeweisRichtung@c[#1]#2#3{{\bfseries(#2 #1 #3).~}}
\def\erzeugeBeweisRichtungBefehle#1#2{
    \expandafter\gdef\csname #1text\endcsname##1##2{\BeweisRichtung[#2]{##1}{##2}}
    \expandafter\gdef\csname #1\endcsname{%
        \@ifnextchar\bgroup{\csname #1@\endcsname}{\csname #1text\endcsname{}{}}%
    }
    \expandafter\gdef\csname #1@\endcsname##1##2{%
        \csname #1text\endcsname{\punktcref{##1}}{\punktcref{##2}}%
    }
}
\def\cal#1{\mathcal{#1}}
\def\mathfrak#1{\mbox{\usefont{U}{euf}{m}{n}#1}}
\def\rectangleblack{\text{\RectangleBold}}
\def\squareblack{\blacksquare}
\def\crefname@full#1#2#3#4#5{%
    \crefname{#1}{#2}{#3}
    \Crefname{#1}{#4}{#5}
}
\def\crefname@fullmod#1#2#3#4#5{%
    \crefname@full{#1}{#2}{#3}{#4}{#5}
    \crefname@full{#1@basic}{#2}{#3}{#4}{#5}
    \crefname@full{#1@withName}{#2}{#3}{#4}{#5}
}
    \def\qedEIGEN#1{\@ifnextchar[{\qedEIGEN@c{#1}}{\qedEIGEN@bes{#1}}}
    \def\qedEIGEN@bes#1{%
        \parfillskip=0pt
        \widowpenalty=10000
        \displaywidowpenalty=10000
        \finalhyphendemerits=0
        \leavevmode
        \unskip
        \nobreak
        \hfil
        \penalty50
        \hskip.2em
        \null
        \hfill
        #1
        \par%
    }
    \def\qedEIGEN@c#1[#2]{%
        \parfillskip=0pt
        \widowpenalty=10000
        \displaywidowpenalty=10000
        \finalhyphendemerits=0
        \leavevmode
        \unskip
        \nobreak
        \hfil
        \penalty50
        \hskip.2em
        \null
        \hfill
        {#1~{\small\bfseries\upshape (#2)}}%
        \par%
    }
    \def\qedVARIANT#1#2{
        \expandafter\def\csname ennde#1Sign\endcsname{#2}
        \expandafter\def\csname ennde#1\endcsname{\@ifnextchar[{\qedEIGEN@c{#2}}{\qedEIGEN@bes{#2}}} 
    }
    \def\ra@pretheoremwork{
        \setlength{\theorempreskipamount}{\ownspaceabovethm}
    }
    \def\rathmtransfer#1#2{
        \expandafter\def\csname #2\endcsname{\csname #1\endcsname}
        \expandafter\def\csname end#2\endcsname{\csname end#1\endcsname}
    }
    \def\ranewthm#1#2#3[#4]{
        \theoremstyle{\current@theoremstyle}
        \theoremseparator{\current@theoremseparator}
        \theoremprework{\ra@pretheoremwork}
        \@ifundefined{#1@basic}{\newtheorem{#1@basic}[#4]{#2}}{\renewtheorem{#1@basic}[#4]{#2}}
        \theoremstyle{\current@theoremstyle}
        \theoremseparator{\thmForceSepPt}
        \theoremprework{\ra@pretheoremwork}
        \@ifundefined{#1@withName}{\newtheorem{#1@withName}[#4]{#2}}{\renewtheorem{#1@withName}[#4]{#2}}
        \theoremstyle{nonumberplain}
        \theoremseparator{\thmForceSepPt}
        \theoremprework{\ra@pretheoremwork}
        \@ifundefined{#1@star@basic}{\newtheorem{#1@star@basic}[#4]{#2}}{\renewtheorem{#1@star@basic}[#4]{#2}}
        \theoremstyle{nonumberplain}
        \theoremseparator{\thmForceSepPt}
        \theoremprework{\ra@pretheoremwork}
        \@ifundefined{#1@star@withName}{\newtheorem{#1@star@withName}[#4]{#2}}{\renewtheorem{#1@star@withName}[#4]{#2}}
        \umbauenenv{#1}{#3}[#4]
        \umbauenenv{#1@star}{#3}[#4]
        \rathmtransfer{#1@star}{#1*}
    }
    \def\umbauenenv#1#2[#3]{%
        \expandafter\def\csname #1\endcsname{\relax%
            \@ifnextchar[{\csname #1@\endcsname}{\csname #1@\endcsname[*]}%
        }
        \expandafter\def\csname #1@\endcsname[##1]{\relax%
            \@ifnextchar[{\csname #1@@\endcsname[##1]}{\csname #1@@\endcsname[##1][*]}%
        }
        \expandafter\def\csname #1@@\endcsname[##1][##2]{%
            \ifx*##1%
                \def\enndeOfBlock{\csname end#1@basic\endcsname}
                \csname #1@basic\endcsname%
            \else%
                \def\enndeOfBlock{\csname end#1@withName\endcsname}
                \csname #1@withName\endcsname[##1]%
            \fi%
            \def\makelabel####1{%
                \gdef\beweislabel{####1}%
                \label{\beweislabel}%
            }%
            \ifx*##2%
                \def\enndeSymbol{\qedEIGEN{#2}}
            \else%
                \def\enndeSymbol{\qedEIGEN{#2}[##2]}
            \fi
        }
        \expandafter\gdef\csname end#1\endcsname{\enndeSymbol\enndeOfBlock}
    }
        \def\current@theoremstyle{plain}
        \def\current@theoremseparator{\thmnumberingseppt}
        \theoremstyle{\current@theoremstyle}
    \def\behauptungbeleg@claim{%
        \iflanguage{british}{Claim}{%
        \iflanguage{english}{Claim}{%
        \iflanguage{ngerman}{Behauptung}{%
        \iflanguage{russian}{Утверждение}{%
        Claim%
        }}}}%
    }
    \def\behauptungbeleg@pf@kurz{%
        \iflanguage{british}{Pf}{%
        \iflanguage{english}{Pf}{%
        \iflanguage{ngerman}{Bew}{%
        \iflanguage{russian}{Доказательство}{%
        Pf%
        }}}}%
    }
    \def\behauptungbeleg{\@ifnextchar\bgroup{\behauptungbeleg@c}{\behauptungbeleg@bes}}
            \def\behauptungbeleg@c#1{\item[{\bfseries \behauptungbeleg@claim\erlaubeplatz #1.}]}
            \def\behauptungbeleg@bes{\item[{\bfseries \behauptungbeleg@claim.}]}
        \def\belegbehauptung{\item[{\bfseries\itshape\behauptungbeleg@pf@kurz.}]}
    \newcolumntype{\RECHTS}[1]{>{\raggedleft}p{#1}}
    \newcolumntype{\LINKS}[1]{>{\raggedright}p{#1}}
    \newcolumntype{m}{>{$}l<{$}}
    \newcolumntype{C}{>{$}c<{$}}
    \newcolumntype{L}{>{$}l<{$}}
    \newcolumntype{R}{>{$}r<{$}}
    \newcolumntype{0}{@{\hspace{0pt}}}
    \newcolumntype{\LINKSRAND}{@{\hspace{\@totalleftmargin}}}
    \newcolumntype{h}{@{\extracolsep{\fill}}}
    \newcolumntype{i}{>{\itshape}}
    \newcolumntype{t}{@{\hspace{\tabcolsep}}}
    \newcolumntype{q}{@{\hspace{1em}}}
    \newcolumntype{n}{@{\hspace{-\tabcolsep}}}
    \newcolumntype{M}[2]{%
        >{\begin{minipage}{#2}\begin{math}}%
        {#1}%
        <{\end{math}\end{minipage}}%
    }
    \newcolumntype{T}[2]{%
        >{\begin{minipage}{#2}}%
        {#1}%
        <{\end{minipage}}%
    }
    \def\punkteumgebung@genbefehl#1#2#3{
        \punkteumgebung@genbefehl@{#1}{#2}{#3}{}{}
        \punkteumgebung@genbefehl@{multi#1}{#2}{#3}{
            \setlength{\columnsep}{10pt}%
            \setlength{\columnseprule}{0pt}%
            \begin{multicols}{\thecolumnanzahl}%
        }{\end{multicols}\nvraum{1}}
    }
    \def\punkteumgebung@genbefehl@#1#2#3#4#5{
        \expandafter\gdef\csname #1\endcsname{
            \@ifnextchar\bgroup{\csname #1@c\endcsname}{\csname #1@bes\endcsname}
        }
            \expandafter\def\csname #1@c\endcsname##1{
                \@ifnextchar[{\csname #1@c@\endcsname{##1}}{\csname #1@c@\endcsname{##1}[\z@]}
            }
            \expandafter\def\csname #1@c@\endcsname##1[##2]{
                \@ifnextchar[{\csname #1@c@@\endcsname{##1}[##2]}{\csname #1@c@@\endcsname{##1}[##2][\z@]}
            }
            \expandafter\def\csname #1@c@@\endcsname##1[##2][##3]{
                \let\alterlinkerRand\gesamtlinkerRand
                \let\alterrechterRand\gesamtrechterRand
                \addtolength{\gesamtlinkerRand}{##2}
                \addtolength{\gesamtrechterRand}{##3}
                \advance\linewidth -##2%
                \advance\linewidth -##3%
                \advance\@totalleftmargin ##2%
                \parshape\@ne \@totalleftmargin\linewidth%
                #4
                \begin{#2}[\upshape ##1]%
                    \setlength{\parskip}{0.5\baselineskip}\relax%
                    \setlength{\topsep}{\z@}\relax%
                    \setlength{\partopsep}{\z@}\relax%
                    \setlength{\parsep}{\parskip}\relax%
                    \setlength{\itemsep}{#3}\relax%
                    \setlength{\listparindent}{\z@}\relax%
                    \setlength{\itemindent}{\z@}\relax%
            }
            \expandafter\def\csname #1@bes\endcsname{
                \@ifnextchar[{\csname #1@bes@\endcsname}{\csname #1@bes@\endcsname[\z@]}
            }
            \expandafter\def\csname #1@bes@\endcsname[##1]{
                \@ifnextchar[{\csname #1@bes@@\endcsname[##1]}{\csname #1@bes@@\endcsname[##1][\z@]}
            }
            \expandafter\def\csname #1@bes@@\endcsname[##1][##2]{
                \let\alterlinkerRand\gesamtlinkerRand
                \let\alterrechterRand\gesamtrechterRand
                \addtolength{\gesamtlinkerRand}{##1}
                \addtolength{\gesamtrechterRand}{##2}
                \advance\linewidth -##1%
                \advance\linewidth -##2%
                \advance\@totalleftmargin ##1%
                \parshape\@ne \@totalleftmargin\linewidth%
                #4
                \begin{#2}%
                    \setlength{\parskip}{0.5\baselineskip}\relax%
                    \setlength{\topsep}{\z@}\relax%
                    \setlength{\partopsep}{\z@}\relax%
                    \setlength{\parsep}{\parskip}\relax%
                    \setlength{\itemsep}{#3}\relax%
                    \setlength{\listparindent}{\z@}\relax%
                    \setlength{\itemindent}{\z@}\relax%
            }
        \expandafter\gdef\csname end#1\endcsname{%
            \end{#2}#5
            \setlength{\gesamtlinkerRand}{\alterlinkerRand}
            \setlength{\gesamtlinkerRand}{\alterrechterRand}
        }
    }
    \def\ritempunkt{{\Large \textbullet}} 
    \setdefaultitem{\ritempunkt}{\ritempunkt}{\ritempunkt}{\ritempunkt}
    \renewenvironment{thebibliography}[1]{%
        \begin{ALTthebibliography}{#1}
        \addcontentsline{toc}{part}{\bibname}
    }{%
        \end{ALTthebibliography}
    }
\def\displaysum_#1{\@ifnextchar^{\displaysum@both_{#1}}{\displaysum@@sub{#1}}}
    \def\displaysum@both_#1^#2{\displaysum@@subsup{#1}{#2}}
    \def\displaysum@@sub#1{\mathop{\displaystyle\csname sum\endcsname_{#1}}}
    \def\displaysum@@subsup#1#2{\mathop{\displaystyle\csname sum\endcsname_{#1}^{#2}}}
\def\displaysup_#1{\@ifnextchar^{\displaysup@both_{#1}}{\displaysup@@sub{#1}}}
    \def\displaysup@both_#1^#2{\displaysup@@subsup{#1}{#2}}
    \def\displaysup@@sub#1{\mathop{\displaystyle\csname sup\endcsname_{#1}}}
    \def\displaysup@@subsup#1#2{\mathop{\displaystyle\csname sup\endcsname_{#1}^{#2}}}
\def\displaymin_#1{\@ifnextchar^{\displaymin@both_{#1}}{\displaymin@@sub{#1}}}
    \def\displaymin@both_#1^#2{\displaymin@@subsup{#1}{#2}}
    \def\displaymin@@sub#1{\mathop{\displaystyle\csname min\endcsname_{#1}}}
    \def\displaymin@@subsup#1#2{\mathop{\displaystyle\csname min\endcsname_{#1}^{#2}}}
\def\displaymax_#1{\@ifnextchar^{\displaymax@both_{#1}}{\displaymax@@sub{#1}}}
    \def\displaymax@both_#1^#2{\displaymax@@subsup{#1}{#2}}
    \def\displaymax@@sub#1{\mathop{\displaystyle\csname max\endcsname_{#1}}}
    \def\displaymax@@subsup#1#2{\mathop{\displaystyle\csname max\endcsname_{#1}^{#2}}}
\def\displaylim_#1{\@ifnextchar^{\displaylim@both_{#1}}{\displaylim@@sub{#1}}}
    \def\displaylim@both_#1^#2{\displaylim@@subsup{#1}{#2}}
    \def\displaylim@@sub#1{\mathop{\displaystyle\csname lim\endcsname_{#1}}}
    \def\displaylim@@subsup#1#2{\mathop{\displaystyle\csname lim\endcsname_{#1}^{#2}}}
\def\displayliminf_#1{\@ifnextchar^{\displayliminf@both_{#1}}{\displayliminf@@sub{#1}}}
    \def\displayliminf@both_#1^#2{\displayliminf@@subsup{#1}{#2}}
    \def\displayliminf@@sub#1{\mathop{\displaystyle\csname liminf\endcsname_{#1}}}
    \def\displayliminf@@subsup#1#2{\mathop{\displaystyle\csname liminf\endcsname_{#1}^{#2}}}
\def\displaylimsup_#1{\@ifnextchar^{\displaylimsup@both_{#1}}{\displaylimsup@@sub{#1}}}
    \def\displaylimsup@both_#1^#2{\displaylimsup@@subsup{#1}{#2}}
    \def\displaylimsup@@sub#1{\mathop{\displaystyle\csname limsup\endcsname_{#1}}}
    \def\displaylimsup@@subsup#1#2{\mathop{\displaystyle\csname limsup\endcsname_{#1}^{#2}}}
    \def\matrix#1{\left(\begin{array}[mc]{#1}}
        \def\endmatrix{\end{array}\right)}
    \def\smatrix{\left(\begin{smallmatrix}}
        \def\endsmatrix{\end{smallmatrix}\right)}
    \def\multiargrekursiverbefehl#1#2#3#4#5#6#7#8{%
        \expandafter\gdef\csname#1\endcsname #2##1#4{\csname #1@anfang\endcsname##1#3\egroup}
        \expandafter\def\csname #1@anfang\endcsname##1#3{#5##1\@ifnextchar\egroup{\csname #1@ende\endcsname}{#7\csname #1@mitte\endcsname}}
        \expandafter\def\csname #1@mitte\endcsname##1#3{#6##1\@ifnextchar\egroup{\csname #1@ende\endcsname}{#7\csname #1@mitte\endcsname}}
        \expandafter\def\csname #1@ende\endcsname##1{#8}
    }
    \def\underbracenodisplay#1{%
        \mathop{\vtop{\m@th\ialign{##\crcr
        $\hfil\displaystyle{#1}\hfil$\crcr
        \noalign{\kern3\p@\nointerlineskip}%
        \upbracefill\crcr\noalign{\kern3\p@}}}}\limits%
    }
    \def\mathe[#1]#2{%
        \ifthenelse{\equal{\boolinmdframed}{\boolwahr}}{}{\begin{escapeeinzug}}
        \noindent%
        \let\eqtagset\boolfalsch
        \let\eqtaglabel\boolleer
        \let\eqtagsymb\boolleer
        \let\alteqtag\eqtag
        \def\eqtag{\@ifnextchar[{\eqtag@loc@}{\eqtag@loc@[*]}}%
        \def\eqtag@loc@[##1]{\@ifnextchar\bgroup{\eqtag@loc@@[##1]}{\eqtag@loc@@[##1]{}}}%
        \def\eqtag@loc@@[##1]##2{%
            \gdef\eqtagset{\boolwahr}
            \gdef\eqtaglabel{##1}
            \gdef\eqtagsymb{##2}
        }%
        \def\verticalalign{}%
            \IfBeginWith{#1}{t}{\def\verticalalign{t}}{}%
            \IfBeginWith{#1}{m}{\def\verticalalign{c}}{}%
            \IfBeginWith{#1}{b}{\def\verticalalign{b}}{}%
        \def\horizontalalign{\null\hfill\null}%
            \IfEndWith{#1}{l}{}{\null\hfill\null}%
            \IfEndWith{#1}{r}{\def\horizontalalign{}}{}%
        \begin{math}
        \begin{array}[\verticalalign]{0#2}%
    }
        \def\endmathe{%
            \end{array}
            \end{math}\horizontalalign%
            \let\eqtag\alteqtag
            \ifthenelse{\equal{\eqtagset}{\boolwahr}}{\eqtag[\eqtaglabel]{\eqtagsymb}}{}
            \ifthenelse{\equal{\boolinmdframed}{\boolwahr}}{}{\end{escapeeinzug}}%
        }
    \def\longmathe[#1]#2{\relax
        \let\altarraystretch\arraystretch
        \renewcommand\arraystretch{1.2}\relax
        \begin{longtable}[#1]{\LINKSRAND #2}
    }
        \def\endlongmathe{
            \end{longtable}
            \renewcommand\arraystretch{\altarraystretch}
        }
    \def\einzug{\@ifnextchar[{\indents@}{\indents@[\z@]}}
        \def\indents@[#1]{\@ifnextchar[{\indents@@[#1]}{\indents@@[#1][\z@]}}
        \def\indents@@[#1][#2]{%
            \begin{list}{}{\relax
                \setlength{\topsep}{\z@}\relax
                \setlength{\partopsep}{\z@}\relax
                \setlength{\parsep}{\parskip}\relax
                \setlength{\listparindent}{\z@}\relax
                \setlength{\itemindent}{\z@}\relax
                \setlength{\leftmargin}{#1}\relax
                \setlength{\rightmargin}{#2}\relax
                \let\alterlinkerRand\gesamtlinkerRand
                \let\alterrechterRand\gesamtrechterRand
                \addtolength{\gesamtlinkerRand}{#1}
                \addtolength{\gesamtrechterRand}{#2}
            }\relax
                \item[]\relax
        }
            \def\endeinzug{%
                \setlength{\gesamtlinkerRand}{\alterlinkerRand}
                \setlength{\gesamtlinkerRand}{\alterrechterRand}
                \end{list}%
            }
    \def\escapeeinzug{\begin{einzug}[-\gesamtlinkerRand][-\gesamtrechterRand]}
        \def\endescapeeinzug{\end{einzug}}
    \def\programmiercode{
        \modulolinenumbers[1]
        \begin{einzug}[\rtab][\rtab]%
        \begin{linenumbers}%
            \fontfamily{cmtt}\fontseries{m}\fontshape{u}\selectfont%
            \setlength{\parskip}{1\baselineskip}%
            \setlength{\parindent}{0pt}%
    }
        \def\endprogrammiercode{
            \end{linenumbers}
            \end{einzug}
        }
    \def\schattiertebox@genbefehl#1#2#3{
        \expandafter\gdef\csname #1\endcsname{%
            \@ifnextchar[{\csname #1@args\endcsname}{\csname #1@args\endcsname[#3]}
        }
            \expandafter\def\csname #1@args\endcsname[##1]{%
                \@ifnextchar[{\csname #1@args@l\endcsname[##1]}{\csname #1@args@n\endcsname[##1]}
            }
            \expandafter\def\csname #1@args@l\endcsname[##1][##2]{%
                \@ifnextchar[{\csname #1@args@l@r\endcsname[##1][##2]}{\csname #1@args@l@n\endcsname[##1][##2]}
            }
            \expandafter\def\csname #1@args@n\endcsname[##1]{%
                \let\boolinmdframed\boolwahr
                \begin{mdframed}[#2leftmargin=0,rightmargin=0,outermargin=0,innermargin=0,##1]
            }
            \expandafter\def\csname #1@args@l@n\endcsname[##1][##2]{%
                \let\boolinmdframed\boolwahr
                \begin{mdframed}[#2leftmargin=##2/2,rightmargin=##2/2,outermargin=##2/2,innermargin=##2/2,##1]
            }
            \expandafter\def\csname #1@args@l@r\endcsname[##1][##2][##3]{%
                \let\boolinmdframed\boolwahr
                \begin{mdframed}[#2leftmargin=##2,rightmargin=##3,outermargin=##2,innermargin=##3,##1]
            }
        \expandafter\gdef\csname end#1\endcsname{%
            \end{mdframed}
            \let\boolinmdframed\boolfalsch
        }
    }
    \def\tikzsetzepfeil#1{%
        \begin{tikzpicture}[remember picture,overlay,>=latex]%
            \draw #1;%
        \end{tikzpicture}%
    }
    \def\tikzsetzekreise[#1]#2#3{%
        \tikzsetzepfeil{%
        [rounded corners,#1]%
            ([shift={(-\tabcolsep,0.75\baselineskip)}]#2)%
            rectangle%
            ([shift={(\tabcolsep,-0.5\baselineskip)}]#3)
        }%
    }
    \tikzset{
        >=stealth,
        auto,
        node distance=1cm,
        thick,
        main node/.style={
            circle,draw,font=\sffamily\Large\bfseries,minimum size=0pt
        },
        state/.style={minimum size=0pt}
        loop above right/.style={loop,out=30,in=60,distance=0.5cm},
        loop above left/.style={above left,out=150,in=120,loop},
        loop below right/.style={below right,out=330,in=300,loop},
        loop below left/.style={below left,out=240,in=210,loop},
        itria/.style={
            draw,dashed,shape border uses incircle,
            isosceles triangle,shape border rotate=90,yshift=-1.45cm
        },
        rtria/.style={
            draw,dashed,shape border uses incircle,
            isosceles triangle,isosceles triangle apex angle=90,
            shape border rotate=-45,yshift=0.2cm,xshift=0.5cm
        },
        ritria/.style={
            draw,dashed,shape border uses incircle,
            isosceles triangle,isosceles triangle apex angle=110,
            shape border rotate=-55,yshift=0.1cm
        },
        litria/.style={
            draw,dashed,shape border uses incircle,
            isosceles triangle,isosceles triangle apex angle=110,
            shape border rotate=235,yshift=0.1cm
        }
    }
\newcommand{\usesinglequotes}[1]{`#1'}
\def\Proj{\mathop{\mathrm{Proj}}}
\def\topInterior#1{\mathop{\textup{int}}(#1)}
\def\oBall#1_#2{\cal{B}_{#2}(#1)}
\def\clBall#1_#2{\quer{\cal{B}}_{#2}(#1)}
\def\Cts{\@ifnextchar_{\Cts@tief}{\Cts@tief_{}}}
    \def\Cts@tief_#1#2{\@ifnextchar\bgroup{\Cts@two_{#1}{#2}}{\Cts@one_{#1}{#2}}}
    \def\Cts@one_#1#2{C_{#1}\big(#2\big)}
    \def\Cts@two_#1#2#3{C_{#1}\big(#2,~#3\big)}
\def\Hom#1#2{\mathrm{\textup{Hom}}\big(#1,~#2\big)}
\def\KmpRm#1{\cal{K}(#1)}
\def\id{\mathop{\textit{id}}}
\def\reals{\mathbb{R}}
\def\kmplx{\mathbb{C}}
\def\uDisc{\cal{D}_{1}}
\def\rtnl{\mathbb{Q}}
\def\intgr{\mathbb{Z}}
\def\onematrix{\text{\upshape\bfseries I}}
\def\zeromatrix{\text{\upshape\bfseries 0}}
\def\zerovector{\text{\upshape\bfseries 0}}
\def\symmdiff{\mathbin{\Delta}}
\def\UpperTrUnit{\mathop{\mathrm{UT}_{1}}}
\def\Heisenberg#1{\mathop{\mathrm{H}_{#1}}}
\def\ExampleGroupHeisenberg{G_{h}}
\def\ExampleMonoidHeisenberg{M_{h}}
\def\ExampleGroupUTunit{G_{u}}
\def\ExampleMonoidUT{M_{u}}
\def\ntrlpos{\mathbb{N}}
\def\ntrlzero{\mathbb{N}_{0}}
\def\realsNonNeg{\reals_{+}}
\def\ntrlpos{\mathbb{N}}
\def\leer{\emptyset}
\def\BRAKET#1#2{\langle{}#1,~#2{}\rangle}
\def\dee{\textup{d}}
\def\einser{\text{\textbf{1}}}
\def\C0{\ensuremath{C_{0}}}
\def\restr#1{\vert_{#1}}
\def\ohne{\setminus}
\def\eps{\varepsilon}
\let\altphi\phi
\let\altvarphi\varphi
    \def\phi{\altvarphi}
    \def\varphi{\altphi}
\def\quer#1{\overline{#1}}
\def\linspann{\mathop{\textup{lin}}}
\def\lim{\mathop{\ell\mathrm{im}}}
\def\dim{\mathop{\textup{dim}}}
\def\ran{\mathop{\textup{ran}}}
\def\co{\mathop{\textup{co}}}
\def\BoundedOps#1{\@ifnextchar\bgroup{\BoundedOps@two{#1}}{\mathop{\mathfrak{L}}(#1)}}
    \def\BoundedOps@two#1#2{\mathop{\mathfrak{L}}(#1,#2)}
\def\HilbertRaum{\mathcal{H}}
\def\BanachRaum{\mathcal{E}}
\def\RaumX{X}
\def\RaumY{Y}
\def\compactcover{\tilde{\cal{K}}}
\def\hardSigma02{\cal{Q}}
    \def\OpSpaceU#1{\mathop{\cal{U}}(#1)}
    \def\OpSpaceI#1{\mathop{\cal{I}}(#1)}
    \def\OpSpaceC#1{\mathop{\cal{C}}(#1)}
\def\SpCs{\cal{F}^{c}_{s}}
\def\SpCw{\cal{F}^{c}_{w}}
\def\SpHs{\cal{C}_{s}}
\def\SpHw{\cal{C}_{w}}
    \def\topWOT{\text{\upshape \scshape wot}}
    \def\toplocWOT{\text{{{$\mathpzc{k}$}}}_{\text{\tiny\upshape \scshape wot}}}
    \def\tinytoplocWOT{\text{\scriptsize{{{$\mathpzc{k}$}}}-{\upshape \scshape wot}}}
    \def\topSOT{\text{\upshape \scshape sot}}
    \def\toplocSOT{\text{{{$\mathpzc{k}$}}}_{\text{\tiny\upshape \scshape sot}}}
    \def\tinytoplocSOT{\text{\scriptsize{{{$\mathpzc{k}$}}}-{\upshape \scshape sot}}}
\renewcommand{\arraystretch}{1}
\def\firstparagraph{\noindent}
\def\continueparagraph{\noindent}
    \def\theunitnamesection{\thesection}
    \def\sectionname{}
    \let\appendix@orig\appendix
    \def\appendix{%
        \appendix@orig%
        \let\boolinappendix\boolwahr
        \addcontentsline{toc}{part}{\appendixname}%
        \addtocontents{toc}{\protect\setcounter{tocdepth}{0}}
        \def\sectionname{Appendix}%
        \def\theunitnamesection{\Alph{section}}%
    }
    \def\notappendix{%
        \let\boolinappendix\boolfalse
        \addtocontents{toc}{\protect\setcounter{tocdepth}{1 }}
        \def\sectionname{}%
        \def\theunitnamesection{\arabic{section}}%
    }
    \def\@seccntformat#1{%
        \protect\textup{%
            \protect\@secnumfont
            \expandafter\protect\csname format#1\endcsname%
            \csname the#1\endcsname
            \expandafter\protect\csname format#1@pt\endcsname%
            \space
        }%
    }
    \def\formatsection@text{\centering\Large\scshape}
    \def\formatsection@pt{\secnumberingseppt}
    \def\section{\@startsection{section}{1}{\z@}{.7\linespacing\@plus\linespacing}{.5\linespacing}{\formatsection@text}}
    \def\formatsubsection@text{\flushleft\bfseries\scshape}
    \def\formatsubsection@pt{\subsecnumberingseppt}
    \def\subsection{\@startsection{subsection}{2}{\z@}{\z@}{\z@\hspace{1em}}{\formatsubsection@text}}
\def\rafootnotectr{20}
\def\incrftnotectr#1{%
    \addtocounter{#1}{1}%
    \ifnum\value{#1}>\rafootnotectr\relax
        \setcounter{#1}{0}%
    \fi%
}
\def\footnoteref[#1]{\protected@xdef\@thefnmark{\ref{#1}}\@footnotemark}
\let\altfootnotetext\footnotetext
    \def\footnotetext[#1]#2{\incrftnotectr{footnote}\altfootnotetext[\value{footnote}]{\label{#1}#2}}
    \def\footnotemark[#1]{\text{\textsuperscript{\getrefnumber{#1}}}}
\def\kopfzeiledefault{
    \lhead[]{}
    \lhead[]{}
    \chead[]{}
    \rhead[]{}
    \lfoot[]{}
    \cfoot{\footnotesize\thepage}
    \rfoot[]{}
}
\def\aktuellesfont{\csnamermfamily\endcsname}
\def\documentfont{%
    \gdef\aktuellesfont{\csnamermfamily\endcsname}%
    \fontfamily{cmr}\fontseries{m}\selectfont%
    \renewcommand{\sfdefault}{phv}%
    \renewcommand{\ttdefault}{pcr}%
    \renewcommand{\rmdefault}{cmr}
    \renewcommand{\bfdefault}{bx}%
    \renewcommand{\itdefault}{it}%
    \renewcommand{\sldefault}{sl}%
    \renewcommand{\scdefault}{sc}%
    \renewcommand{\updefault}{n}%
}
\def\startdocumentlayoutoptions{
    \selectlanguage{british}
    \setlength{\parskip}{0.25\baselineskip}
    \setlength{\parindent}{2em}
    \kopfzeiledefault
    \documentfont
    \normalsize
}
\def\highlightTerm#1{\emph{#1}}
\newcommand{\highlightForReview}[1]{%
    \bgroup\color{blue}#1\egroup%
}
\def\@adminfootnotes{%
    \let\@makefnmark\relax
    \let\@thefnmark\relax
    \ifx\@empty\@date\else%
        \@footnotetext{\@setdate}%
    \fi%
    \ifx\@empty\@subjclass\else%
        \@footnotetext{\@setsubjclass}%
    \fi
    \ifx\@empty\@keywords\else%
        \@footnotetext{\@setkeywords}%
    \fi
    \ifx\@empty\thankses\else%
        \@footnotetext{\def\par{\let\par\@par}\@setthanks}%
    \fi
}
\def\@settitle{\Large\bfseries\scshape\@title}
\def\@maketitle{%
  \normalfont\normalsize
  \@adminfootnotes
  \@mkboth{\@nx\shortauthors}{\@nx\shorttitle}%
  \global\topskip42\p@\relax
  {\centering\@settitle}
  \ifx\@empty\authors\else{\centering\small\@setauthors}\fi
  \ifx\@empty\@date\else{\vtop{\centering\small\@date\@@par}}\fi
  \ifx\@empty\@dedicatory%
  \else%
    \baselineskip\p@
    \vtop{\centering{\footnotesize\itshape\@dedicatory\@@par}%
    \global\dimen@i\prevdepth}\prevdepth\dimen@i%
  \fi
  \@setabstract
  \normalsize
  \if@titlepage
    \newpage
  \else
    \dimen@34\p@\advance\dimen@-\baselineskip
  \fi
}
\def\addresseshere{%
  \bgroup
  \setlength{\parindent}{0pt}
  \enddoc@text
  \egroup
  \let\enddoc@text\relax
}
\begin{document}
\startdocumentlayoutoptions

\thispagestyle{plain}



\def\abstractname{Abstract}
\begin{abstract}
    The space of unitary $\C0$-semigroups on separable infinite-dimensional Hilbert space,
    when viewed under the topology of uniform weak operator convergence on compact subsets of $\realsNonNeg$,
    is known to admit various interesting residual subspaces.
    Before treating the contractive case, the problem of
    the complete metrisability of this space was raised in
        \cite{eisner2010buchStableOpAndSemigroups}.
    Utilising Borel complexity computations and automatic continuity results for semigroups,
    we obtain a general result, which in particular implies that the
    one-/multiparameter contractive $\C0$-semigroups constitute Polish spaces
    and thus positively addresses the open problem.
\end{abstract}



\subjclass[2020]{47D06, 54E35}
\keywords{Semigroups of operators, metrisability, completeness, Polish spaces, Borel complexity.}
\title[On the complete metrisability of spaces of contractive semigroups]{On the complete metrisability of spaces of contractive semigroups}
\author{Raj Dahya}
\email{raj.dahya@web.de}
\address{Fakult\"at f\"ur Mathematik und Informatik\newline
Universit\"at Leipzig, Augustusplatz 10, D-04109 Leipzig, Germany}

\maketitle



\setcounternach{section}{1}



\section[Introduction]{Introduction}
\label{sec:intro}

\firstparagraph
In
    \cite{dahya2022weakproblem}
the space of contractive $\C0$-semigroups over a separable infinite-dimensional Hilbert space,
and when viewed with the topology of uniform weak operator convergence on compact subsets of $\realsNonNeg$,
was shown to constitute a Baire space.
The main application of this is
    \cite[Proposition 5.1]{dahya2022weakproblem},
    which relies on the approximation result in
    \cite[Theorem~2.1]{krol2009}
and shows that residual properties for the unitary case automatically transfer to the contractive case.
In particular, this application renders meaningful the residuality results achieved in
    \cite{eisnersereny2009catThmStableSemigroups},
    \cite[Corollary~3.2]{krol2009},
    and \cite[\S{}III.6 and \S{}IV.3.3]{eisner2010buchStableOpAndSemigroups}.
Note also that residual properties of (contractive) operators on Banach spaces,
as initiated in \cite{eisner2010typicalContraction,eisnermaitrai2010typicalOperators},
have recently been studied in connection with
\emph{hypercyclicity} and
the \emph{Invariant Subspace Problem}
in \cite{grivaux2021localspecLp,grivaux2021invariantsubspaceLp,grivaux2021typicalexamples}.
The continuous case remains to be investigated.

In this paper we improve upon the result in \cite{dahya2022weakproblem}
and show that the space of contractive $\C0$-semigroups is \highlightTerm{Polish} (\idest separable, completely metrisable).
In fact, we prove this for spaces of more generally defined semigroups, including multiparameter semigroups.
In particular, our result positively solves a problem raised in
    \cite[\S{}III.6.3]{eisner2010buchStableOpAndSemigroups}
    (\cf also \cite[Remark~2.2]{Eisner2008kato}).
There it was shown that,
when viewed under the topology of uniform weak operator convergence on compact time intervals,
the space of contractive $\C0$-semigroups
is not sequentially closed within the larger space of continuous contraction-valued functions.
We shall reinforce this by studying the geometric properties of these spaces in a general setting,
and providing a deeper reason for this failure (see \Cref{cor:broad-class-of-counterexamples-to-Hs-closed:sig:article-str-problem-raj-dahya}).
This renders the complete metrisability problem non-trivial.

The approach in
    \cite[Theorem~1.20]{dahya2022weakproblem}
involves studying and transferring properties from the subspace of unitary semigroups,
which is a Baire space.
This method crucially relies on the fact that contractive semigroups
can be weakly approximated by unitary semigroups.
These density results in turn arise from the theory of dilations
(\cf
    \cite[Theorem~1]{peller1981estimatesOperatorPolyLp}
    and
    \cite[Theorem~2.1]{krol2009}%
).
By contrast, the approach here bypasses dependency upon dilation.
Instead we directly classify
the space of contractive $\C0$-semigroups
in terms of its Borel complexity within a larger, completely metrisable space.
This complexity result in turn implies complete metrisability
(see \Cref{thm:main-result:sig:article-str-problem-raj-dahya}).

Our result encompasses a broad class of spaces on which the semigroups are defined.
We provide basic examples in the main text
and broaden this to a larger class in \Cref{app:continuity}.
The generality of the main result may also be of interest to other fields.
Multiparameter semigroups, for example, occur in
    structure theorems
        (see \exempli \cite{lopushanskyMultiparamFourier}),
    the study of diffusion equations in space-time dynamics
        (see \exempli \cite{zelik2004}),
    the approximation of periodic functions in multiple variables
        (see \exempli \cite{terehin1975}),
\etcetera.



\section[Definitions of spaces of semigroups]{Definitions of spaces of semigroups}
\label{sec:intro-definitions}

\firstparagraph
Throughout this paper, $\HilbertRaum$ shall denote a fixed separable infinite-dimensional Hilbert space.
Furthermore,

    \begin{mathe}[mc]{rcccccl}
        \BoundedOps{\HilbertRaum}
            &\supseteq &\OpSpaceC{\HilbertRaum}
            &\supseteq &\OpSpaceI{\HilbertRaum}
            &\supseteq &\OpSpaceU{\HilbertRaum}\\
    \end{mathe}

\continueparagraph
denote (from left to right) the spaces of bounded linear operators, contractions, isometries, and unitaries over $\HilbertRaum$.
These can be endowed with the weak operator topology ($\topWOT$)
or the strong operator topology ($\topSOT$).

Instead of working with semigroups defined on $\realsNonNeg$ (continuous time)
or over $\ntrlzero$ (discrete time), we shall more generally work with semigroups parameterised by a topological monoid.

\begin{defn}
    \makelabel{defn:semigroup-on-h:sig:article-str-problem-raj-dahya}
    Let $(M,\cdot,1)$ be a topological monoid.
    A \highlightTerm{semigroup} over $\HilbertRaum$ on $M$
    shall mean any operator-valued function, ${T:M\to\BoundedOps{\HilbertRaum}}$,
    satisfying ${T(1)=\onematrix}$ and ${T(st)=T(s)T(t)}$ for ${s,t\in M}$.
\end{defn}

In other words, semigroups are just certain kinds of algebraic morphisms.
Observe that the above definition applied to the topological monoid
    $(\realsNonNeg,+,0)$
yields the usual definition of an operator semigroup.

The continuous contractive semigroups defined on $M$ may be viewed as subspaces
of the function spaces $\Cts{M}{\OpSpaceC{\HilbertRaum}}$,
where $\OpSpaceC{\HilbertRaum}$
may be endowed with either the $\topWOT$- or $\topSOT$-topology.
We summarise these spaces and their topologies as follows:

\begin{defn}
    \makelabel{defn:standard-funct-spaces:sig:article-str-problem-raj-dahya}
    Let $M$ be a topological monoid.
    Denote via
        ${\SpCs(M)\colonequals\Cts{M}{(\OpSpaceC{\HilbertRaum},\topSOT)}}$
        the $\topSOT$-continuous contraction-valued functions
        defined on $M$,
    and via ${\SpHs(M)\colonequals\Hom{M}{(\OpSpaceC{\HilbertRaum},\topSOT)}}$
        the $\topSOT$-continuous contractive semigroups
            over $\HilbertRaum$ on $M$.
    Denote via $\SpCw(M)$ and $\SpHw(M)$ the respective $\topWOT$-continuous counterparts.
\end{defn}

\begin{defn}
\makelabel{defn:loc-wot-sot-convergence:sig:article-str-problem-raj-dahya}
    Let $M$ be a topological monoid
    and let $\RaumX$ be any of the spaces in \Cref{defn:standard-funct-spaces:sig:article-str-problem-raj-dahya}.
    Let $\KmpRm{M}$ denote the collection of compact subsets of $M$.
    The topologies of
        \highlightTerm{uniform \topWOT-convergence on compact subsets of $M$}
        (short: $\toplocWOT$),
    \respectively of
        \highlightTerm{uniform \topSOT-convergence on compact subsets of $M$}
        (short: $\toplocSOT$)
    are induced via the convergence conditions:

    \begin{mathe}[mc]{rcll}
        T_{i} \overset{\tinytoplocWOT}{\longrightarrow} T
            &:\Longleftrightarrow
                &\forall{\xi,\eta\in\HilbertRaum:~}
                \forall{K\in\KmpRm{M}:~}
                &\sup_{t\in K}|\BRAKET{(T_{i}(t)-T(t))\xi}{\eta}|\underset{i}{\longrightarrow}0\\
        T_{i} \overset{\tinytoplocSOT}{\longrightarrow} T
            &:\Longleftrightarrow
                &\forall{\xi\in\HilbertRaum:~}
                \forall{K\in\KmpRm{M}:~}
                &\sup_{t\in K}\|(T_{i}(t)-T(t))\xi\|\underset{i}{\longrightarrow}0\\
    \end{mathe}

    \continueparagraph
    for all nets, $(T_{i})_{i}\subseteq\RaumX$ and all $T\in\RaumX$.
\end{defn}

Working with these definitions, one can readily classify some of these spaces as follows:

\begin{prop}
\makelabel{prop:basic:SpC:basic-polish:sig:article-str-problem-raj-dahya}
    Let $M$ be a locally compact Polish monoid.
    Then $(\SpCw(M),\toplocWOT)$,
        $(\SpCs(M),\toplocSOT)$,
        and
        $(\SpHs(M),\toplocSOT)$
    are Polish spaces.
\end{prop}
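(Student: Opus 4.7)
The plan is to deduce all three Polishness claims by combining three standard building blocks. The first step is to observe that the codomain $\OpSpaceC{\HilbertRaum}$ is Polish under both $\topWOT$ and $\topSOT$: with respect to an orthonormal basis $(e_n)_{n \in \ntrl}$ of $\HilbertRaum$, one has the compatible metrics $\sum_{n,m} 2^{-n-m} |\langle (T-S)e_n, e_m\rangle|$ and $\sum_n 2^{-n} \min(1, \|(T-S)e_n\|)$, with $\topWOT$-completeness coming from Banach--Alaoglu compactness and $\topSOT$-completeness from extending pointwise contractive limits by boundedness from a dense subset. I would invoke this as routine.

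The second step would be to apply the classical function-space fact that for locally compact Polish $M$ and Polish $Y$, the space $C(M, Y)$ with the topology of uniform convergence on compact sets is Polish. Fixing a $\sigma$-compact exhaustion $M = \bigcup_n K_n$ with $K_n \in \KmpRm{M}$ and $K_n \subseteq \mathrm{int}(K_{n+1})$ (available since locally compact Polish implies $\sigma$-compact), the restriction maps realise $C(M, Y)$ as a closed subspace of the countable Polish product $\prod_n C(K_n, Y)$, hence Polish. Applying this with $Y = (\OpSpaceC{\HilbertRaum}, \topWOT)$ and $Y = (\OpSpaceC{\HilbertRaum}, \topSOT)$ immediately gives Polishness of $(\SpCw(M), \toplocWOT)$ and $(\SpCs(M), \toplocSOT)$.

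For the third assertion, it suffices to show that $\SpHs(M)$ is closed in $(\SpCs(M), \toplocSOT)$, since closed subspaces of Polish spaces are Polish. The ambient space being metrisable, this reduces to a sequential check. If $T_k \to T$ in $\toplocSOT$ with each $T_k \in \SpHs(M)$, then for any $s, t \in M$ and $\xi \in \HilbertRaum$ the estimate
\[
\|T_k(s) T_k(t) \xi - T(s) T(t) \xi\| \le \|(T_k(t) - T(t)) \xi\| + \|(T_k(s) - T(s)) T(t) \xi\|,
\]
together with $\|T_k(s)\| \le 1$, yields $\topSOT$-convergence $T_k(s) T_k(t) \to T(s) T(t)$, so the semigroup identity $T_k(st) = T_k(s) T_k(t)$ passes to the limit, as does $T(1) = \onematrix$. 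The main conceptual point --- and the reason this elementary route does \emph{not} extend to $\SpHw(M)$ --- is that the closure step crucially relies on joint $\topSOT$-continuity of multiplication on bounded sets, which has no $\topWOT$ analogue; this is presumably why the paper's main theorem must handle the $\topWOT$-side via a separate Borel-complexity analysis.
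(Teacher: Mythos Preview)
Your proposal is correct and follows essentially the same route as the paper: both arguments reduce to the Polishness of $\OpSpaceC{\HilbertRaum}$ under $\topWOT$/$\topSOT$, embed $C(M,Y)$ as a closed subspace of a countable product $\prod_{K}C(K,Y)$ via restriction maps over a countable family of compacta (you use a nested exhaustion, the paper a cover by interiors---an inessential variation), and then observe that $\SpHs(M)$ is $\toplocSOT$-closed in $\SpCs(M)$. Your explicit sequential check of the latter, together with the closing remark on why the $\topWOT$ analogue fails, supplies detail the paper leaves implicit but does not depart from its strategy.
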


For a full proof see
    \cite[Propositions~1.16~and~1.18]{dahya2022weakproblem}.
For the reader's convenience, we sketch the arguments here.

    \begin{proof}[of \Cref{prop:basic:SpC:basic-polish:sig:article-str-problem-raj-dahya}]
        First note that the spaces,
            ${(\OpSpaceC{\HilbertRaum},\topWOT)}$
            and
            ${(\OpSpaceC{\HilbertRaum},\topSOT)}$,
        are well-known to be Polish
        (%
            see \exempli
            \cite[Exercise~3.4~(5) and Exercise~4.9]{kech1994}%
        ).
        To prove the first claim, we need to show that
            $(\Cts{M}{\RaumY},\toplocWOT)$
        is Polish, where ${\RaumY\colonequals(\OpSpaceC{\HilbertRaum},\topWOT)}$.
        Since $M$ is a locally compact Polish space,
        and since for metrisable spaces, separability is equivalent to second countability,
        one can readily construct a countable collection of compact subsets,
            ${\compactcover\subseteq\KmpRm{M}}$,
        such that $\{\topInterior{K}\mid K\in\compactcover\}$ covers $M$.
        Consider the spaces $\Cts{K}{\RaumY}$ for $K\in\compactcover$ and endow these with the topology of uniform convergence,
        which makes them Polish spaces
            (see \exempli
                \cite[Theorem~4.19]{kech1994}
                or
                \cite[Lemma~3.96--7,~3.99]{aliprantis2005}%
            ).
        The map

            \begin{mathe}[mc]{rcccl}
                \Psi &: &\Cts{M}{\RaumY} &\to &\prod_{K\in\compactcover}\Cts{K}{\RaumY}\\
                    &&f &\mapsto &(f\restr{K})_{K\in\compactcover}\\
            \end{mathe}

        \continueparagraph
        is clearly well-defined.
        Since $\{\topInterior{K} \mid K\in\compactcover\}$ covers $M$ and $M$ is locally compact,
        the map is also clearly bicontinuous.
        The covering property also guarantees that every coherent sequence of continuous functions
            $(f_{K})_{K\in\compactcover}\in\prod_{K\in\compactcover}\Cts{K}{\RaumY}$
        corresponds to a unique continuous function,
            ${f\colonequals\bigcup_{K\in\compactcover}f_{K}\in\Cts{M}{\RaumY}}$,
        satisfying $\Psi(f)=(f_{K})_{K\in\compactcover}$.
        Thus $\Psi$ is a homeomorphism between $\Cts{M}{\RaumY}$
        and the subspace of coherent sequences of continuous functions.
        Since the product of Polish spaces is Polish
            (see
                \cite[Corollary~3.39]{aliprantis2005}%
            )
        and the subspace of coherent sequences is clearly closed under the product topology,
        it follows that $(\Cts{M}{\RaumY},\toplocWOT)$ is Polish.

        The second claim is obtained in the same manner,
        by replacing
            $\RaumY$ by $(\OpSpaceC{\HilbertRaum},\topSOT)$
            and $\toplocWOT$ by $\toplocSOT$
        above.
        Finally, it is easy to verify that $\SpHs(M)$ is a closed subspace within $(\SpCs(M),\toplocSOT)$,
        and thus that $(\SpHs(M),\toplocSOT)$ too is Polish.
    \end{proof}

The aim of this paper is to show that $(\SpHs(M),\toplocWOT)$ is completely metrisable
for some topological monoids $M$, in particular for $M=(\realsNonNeg,+,0)$.
We can achieve this for a broad class of monoids, by appealing to the following condition:

\begin{defn}
\makelabel{defn:good-monoids:sig:article-str-problem-raj-dahya}
    Call a topological monoid, $M$, \highlightTerm{\usesinglequotes{good}},
    if the contractive $\topWOT$-continuous semigroups
    over $\HilbertRaum$ on $M$
    are automatically $\topSOT$-continuous,
    \idest if $\SpHw(M)=\SpHs(M)$ holds.
\end{defn}

All discrete monoids (including non-commutative ones) are trivially \usesinglequotes{good}.
By a classical result, $(\realsNonNeg,+,0)$ is \usesinglequotes{good}
(\cf
    \cite[Theorem~5.8]{Engel1999}
    as well as
    \cite[Theorem~9.3.1 and Theorem~10.2.1--3]{hillephillips2008}%
).
Furthermore, it is easy to see that \usesinglequotes{good} monoids are closed under products:

\begin{prop}
\makelabel{prop:good-monoids-closed-under-products:sig:article-str-problem-raj-dahya}
    Let $d\in\ntrlpos$ and $M_{1},M_{2},\ldots,M_{d}$ be \usesinglequotes{good} topological Polish monoids.
    Then $\prod_{i=1}^{d}M_{i}$ is \usesinglequotes{good}.
\end{prop}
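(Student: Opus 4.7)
The plan is to reduce the claim to the goodness of each factor by factoring any semigroup on the product through coordinate embeddings. Set $M\colonequals\prod_{i=1}^{d}M_{i}$ and let $T\in\SpHw(M)$ be an arbitrary contractive $\topWOT$-continuous semigroup over $\HilbertRaum$ on $M$. For each $i\in\{1,\ldots,d\}$ introduce the canonical embedding
$\iota_{i}:M_{i}\to M$, $t\mapsto(1,\ldots,1,t,1,\ldots,1)$ (with $t$ in the $i$-th slot). Each $\iota_{i}$ is a continuous unital monoid homomorphism, so the pullback $T_{i}\colonequals T\circ\iota_{i}$ lies in $\SpHw(M_{i})$. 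Since $M_{i}$ is \usesinglequotes{good}, we get $T_{i}\in\SpHs(M_{i})$, \idest $T_{i}$ is $\topSOT$-continuous.

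Next I exploit the product structure. For $t=(t_{1},\ldots,t_{d})\in M$ one has the identity $t=\iota_{1}(t_{1})\cdot\iota_{2}(t_{2})\cdots\iota_{d}(t_{d})$, and the elements $\{\iota_{i}(t_{i})\}_{i}$ mutually commute inside $M$ by componentwise multiplication. Applying the semigroup property of $T$ therefore yields the factorisation
\begin{mathe}[mc]{rcl}
    T(t_{1},\ldots,t_{d}) &=& T_{1}(t_{1})\,T_{2}(t_{2})\cdots T_{d}(t_{d})
\end{mathe}
for all $t\in M$. So $T$ is expressed as the pointwise product of $d$ functions, each of which is $\topSOT$-continuous in its single coordinate and contractive.

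To finish I invoke the standard fact that operator composition $(A,B)\mapsto AB$ is \emph{jointly} $\topSOT$-continuous on norm-bounded subsets of $\BoundedOps{\HilbertRaum}$ (whereas it is only separately $\topWOT$-continuous in general). Because each $T_{i}$ maps into the unit ball $\OpSpaceC{\HilbertRaum}$, a short induction on $d$ shows that
$(t_{1},\ldots,t_{d})\mapsto T_{1}(t_{1})T_{2}(t_{2})\cdots T_{d}(t_{d})$
is jointly $\topSOT$-continuous on $M$. Hence $T\in\SpHs(M)$, as required.

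The main obstacle I anticipate is not conceptual but rather the care needed when quoting joint $\topSOT$-continuity of multiplication; this must be cited (or briefly justified via an $\varepsilon/3$ argument using contractivity to control cross-terms) rather than assumed with $\topWOT$, where the analogous statement fails. Everything else—continuity of the coordinate embeddings, the factorisation via commuting components, and the automatic passage from $\topWOT$- to $\topSOT$-continuity on each factor—is direct from the definitions and the hypothesis that each $M_{i}$ is \usesinglequotes{good}. Polishness of the factors is not used in the argument itself but is presumably assumed so that the product is again a Polish monoid, keeping within the setting of the preceding results.
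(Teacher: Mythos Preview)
Your proof is correct and follows essentially the same approach as the paper: factor $T$ through the coordinate embeddings, use goodness of each $M_{i}$ to get $\topSOT$-continuity of the $T_{i}$, and then invoke joint $\topSOT$-continuity of multiplication on contractions to conclude. Your closing observation that Polishness plays no role in the argument is apt.
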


    \begin{proof}
        Let ${T:\prod_{i=1}^{d}M_{i}\to\OpSpaceC{\HilbertRaum}}$
        be a $\topWOT$-continuous contractive semigroup.
        We need to show that $T$ is $\topSOT$-continuous.
        For each $k\in\{1,2,\ldots,d\}$
        let
            ${\pi_{k}:\prod_{i=1}^{d}M_{i}\to M_{k}}$
        denote the canonical projection,
        which is a (continuous) monoid homomorphism,
        and
        let
            ${r_{k}:M_{k}\to\prod_{i=1}^{d}M_{i}}$
        denote the canonical (continuous) monoid homomorphism
        defined by $r_{k}(t)=(1,1,\ldots,t,\ldots,1)$
        (the $d$-tuple with $t$ in the $k$-th position and identity elements elsewhere)
        for all $t\in M_{k}$.
        For each $k\in\{1,2,\ldots,d\}$
        observe further that
        ${T_{k}:M_{k}\to\OpSpaceC{\HilbertRaum}}$
        define by ${T_{k} \colonequals T\circ r_{k}}$
        is a $\topWOT$-continuous homomorphism.
        That is, each $T_{k}$ is a $\topWOT$-continuous contractive semigroup over $\HilbertRaum$ on $M_{k}$.
        Since each $M_{k}$ is \usesinglequotes{good}, these are $\topSOT$-continuous.
        Observe now, that

            \begin{mathe}[mc]{rcl}
                T(t)
                    &= &T(\prod_{i=1}^{d}r_{k}(\pi_{k}(t)))\\
                    &= &T(r_{1}(\pi_{1}(t)))\cdot T(r_{2}(\pi_{2}(t)))\cdot\ldots\cdot T(r_{d}(\pi_{d}(t)))\\
                    &= &T_{1}(\pi_{1}(t))\cdot T_{2}(\pi_{2}(t))\cdot\ldots\cdot T_{d}(\pi_{d}(t))\\
            \end{mathe}

        \continueparagraph
        holds for all $t\in \prod_{i=1}^{d}M_{i}$.
        Since the algebraic projections are continuous
        and the $T_{k}$ are $\topSOT$-continuous and contractive,
        and since multiplication of contractions is $\topSOT$-continuous,
        it follows that $T$ is $\topSOT$-continuous.
    \end{proof}

Thus we immediately obtain the following examples of \usesinglequotes{good} monoids:

\begin{cor}
\makelabel{cor:multiparameter-auto-cts:sig:article-str-problem-raj-dahya}
    For each $d\in\ntrlpos$ the monoid $\realsNonNeg^{d}$, viewed under pointwise addition, is \usesinglequotes{good}.
\end{cor}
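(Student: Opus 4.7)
The plan is to derive this corollary as an immediate consequence of the two preceding ingredients established in the excerpt: the classical fact (cited via \cite[Theorem~5.8]{Engel1999} and \cite[Theorem~9.3.1 and Theorem~10.2.1--3]{hillephillips2008}) that $(\realsNonNeg,+,0)$ is a \usesinglequotes{good} monoid, and \Cref{prop:good-monoids-closed-under-products:sig:article-str-problem-raj-dahya}, which tells us that the class of \usesinglequotes{good} Polish monoids is closed under finite products.

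Concretely, I would first verify that the hypotheses of \Cref{prop:good-monoids-closed-under-products:sig:article-str-problem-raj-dahya} are met by taking $M_{i}\colonequals(\realsNonNeg,+,0)$ for $i=1,2,\ldots,d$. Since $\realsNonNeg$ is a closed subspace of the Polish space $\reals$, it is itself a Polish space, and addition together with the identity element $0$ makes it a topological monoid. The cited Hille--Yosida-type continuity result ensures that this monoid is \usesinglequotes{good}. Invoking \Cref{prop:good-monoids-closed-under-products:sig:article-str-problem-raj-dahya} with these $d$ factors then yields that $\realsNonNeg^{d}=\prod_{i=1}^{d}M_{i}$ is \usesinglequotes{good}, which is precisely the claim.

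There is essentially no obstacle here; the substantive content sits entirely in the product-closure proposition that has already been proved, and the classical one-parameter result that has already been cited. The corollary is stated separately only to record the concrete examples most relevant to the multiparameter applications mentioned in the introduction.
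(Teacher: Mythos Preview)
Your proposal is correct and matches the paper's own reasoning: the corollary is stated immediately after \Cref{prop:good-monoids-closed-under-products:sig:article-str-problem-raj-dahya} with the phrase ``Thus we immediately obtain the following examples'', and no separate proof is given. The argument you spell out---each factor $(\realsNonNeg,+,0)$ is a Polish \usesinglequotes{good} monoid by the classical one-parameter result, and the product proposition then yields the claim for $\realsNonNeg^{d}$---is precisely the intended derivation.
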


If one more generally considers monoids which are closed subspaces of locally compact Hausdorff topological groups,
a sufficient topological condition exists,
which guarantees that a monoid is \usesinglequotes{good}
(see
    \Cref{app:continuity:defn:conditition-II:sig:article-str-problem-raj-dahya}
    and
    \Cref{app:continuity:thm:generalised-auto-continuity:sig:article-str-problem-raj-dahya}%
).
By
    \Cref{%
        app:continuity:e.g.:extendible-mon:reals:sig:article-str-problem-raj-dahya,%
        app:continuity:e.g.:extendible-mon:p-adic:sig:article-str-problem-raj-dahya,%
        app:continuity:e.g.:extendible-mon:discrete:sig:article-str-problem-raj-dahya,%
        app:continuity:e.g.:extendible-mon:non-comm-non-discrete:sig:article-str-problem-raj-dahya,%
    }
    and
    \Cref{app:continuity:prop:monoids-with-cond-closed-under-fprod:sig:article-str-problem-raj-dahya},
the class of monoids satisfying this condition
is closed under finite products
and includes all discrete monoids,
the non-negative reals under addition $(\realsNonNeg,+,0)$,
the $p$-adic integers under addition $(\intgr_{p},+,0)$ for all $p\in\mathbb{P}$,
and even non-discrete non-commutative monoids including
naturally definable monoids contained within the Heisenberg group of order $2d-3$
for each $d\geq 2$.



\section[The $\toplocWOT$-closure of the space of contractive semigroups]{The $\toplocWOT$-closure of the space of contractive semigroups}
\label{sec:convexity}

\firstparagraph
The simplest approach to demonstrate the complete metrisability of
    $(\SpHs(M),\toplocWOT)$
would be to show that this be a closed subspace within the function space
    $(\SpCw(M),\toplocWOT)$,
which we already know to be Polish (see \Cref{prop:basic:SpC:basic-polish:sig:article-str-problem-raj-dahya}).
In
    \cite[Example~\S{}III.6.10]{eisner2010buchStableOpAndSemigroups}
    and
    \cite[Example~2.1]{Eisner2008kato}
a construction is provided, which demonstrates that this fails
in particular in the case of one-parameter contractive $\C0$-semigroups.
In this section we reveal that the deeper reason for this failure
is that the closure of $\SpHs(M)$ within $(\SpCw(M),\toplocWOT)$ is always convex,
whereas for a broad class of topological monoids, $M$, the subset $\SpHs(M)$ is not convex
(see \Cref{cor:broad-class-of-counterexamples-to-Hs-closed:sig:article-str-problem-raj-dahya} below).

Before we proceed, we require a few definitions.
In the following $M$ shall denote an arbitrary topological monoid.
We continue to use $\HilbertRaum$ to denote a separable infinite-dimensional Hilbert space
and $\onematrix_{\HilbertRaum}$ (or simply $\onematrix$) for the identity operator.

\begin{defn}
\makelabel{defn:partition-of-identity-operator:sig:article-str-problem-raj-dahya}
    For $n\in\ntrlpos$ and
        ${u_{1},u_{2},\ldots,u_{n}\in\BoundedOps{\HilbertRaum}}$
    we shall call
        $(u_{1},u_{2},\ldots,u_{n})$
    an \highlightTerm{isometric partition of the identity},
    just in case the following axioms hold:

    \begin{kompaktenum}{(\bfseries {P}1)}[\rtab][\rtab]
    \item\label{ax:partition:1}
        $u_{j}^{\ast}u_{i}=\delta_{ij}\cdot\onematrix$ for all $i,j\in\{1,2,\ldots,n\}$.
    \item\label{ax:partition:2}
        $\sum_{i=1}^{n}u_{i}u_{i}^{\ast}=\onematrix$.
    \end{kompaktenum}

    \nvraum{1}

\end{defn}

Note that by axiom
    (P\ref{ax:partition:1})
the operators in an isometric partition are necessarily isometries.

\begin{rem}
\makelabel{rem:isometric-partitions-of-one-correspond-to-decompositions:sig:article-str-problem-raj-dahya}
    Let $n\in\ntrlpos$. If $(u_{1},u_{2},\ldots,u_{n})$ is an isometric partition of $\onematrix$,
    then, letting
        ${\HilbertRaum_{i}\colonequals\ran(u_{i})}$ for $i\in\{1,2,\ldots,n\}$,
    it is easy to see that
        (P\ref{ax:partition:1}) and (P\ref{ax:partition:2}),
    together with the fact that each $u_{i}$ is necessarily an isometry,
    imply that the $\HilbertRaum_{i}$ are mutually orthogonal closed subspaces of $\HilbertRaum$
    and that $\HilbertRaum=\bigoplus_{i=1}^{n}\HilbertRaum_{i}$.
    Conversely, if $\HilbertRaum$ can be decomposed as $\bigoplus_{i=1}^{n}\HilbertRaum_{i}$
    where each $\HilbertRaum_{i}\subseteq\HilbertRaum$ is a closed subspace
    with $\dim(\HilbertRaum_{i})=\dim(\HilbertRaum)$ for each $i$,
    then letting $u_{i}\in\BoundedOps{\HilbertRaum}$
    be any isometries with $\ran(u_{i})=\HilbertRaum_{i}$
    for each $i\in\{1,2,\ldots,n\}$,
    one can readily see that
        (P\ref{ax:partition:1}) and (P\ref{ax:partition:2})
    are satisfied.
    Thus, isometric partitions of $\onematrix$
    can be constructed from orthogonal decompositions into infinite-dimensional closed subspaces of $\HilbertRaum$
    and vice versa.
\end{rem}

Of course, these observations only apply for infinite-dimensional Hilbert spaces.

\begin{defn}
    Let $n\in\ntrlpos$,
    $(u_{1},u_{2},\ldots,u_{n})$ be an isometric partition of $\onematrix$,
    and $T_{1},T_{2},\ldots,T_{n}\in\SpCw(M)$.
    Denote via $(\bigoplus_{i=1}^{n}T_{i})_{\quer{u}}$
    the operator-valued function ${T:M\to\BoundedOps{\HilbertRaum}}$
    given by

        \begin{mathe}[mc]{rcl}
            T(\cdot) &= &\sum_{i=1}^{n}u_{i} T_{i}(\cdot) u_{i}^{\ast}.\\
        \end{mathe}

    \nvraum{1}

\end{defn}

\begin{defn}
    Let $A\subseteq\SpCw(M)$.
    Say that $A$ is \highlightTerm{closed under finite joins}
    just in case for all $n\in\ntrlpos$,
    all isometric partitions
        $\quer{u} \colonequals (u_{1},u_{2},\ldots,u_{n})$ of $\onematrix$,
    and all $T_{1},T_{2},\ldots,T_{n}\in A$,
    it holds that
    ${(\bigoplus_{i=1}^{n}T_{i})_{\quer{u}}\in A}$.
\end{defn}

The property of being closed under finite joins is a key ingredient
in proving the convexity of the closure of subsets
(see \Cref{lemm:closure-is-convex:sig:article-str-problem-raj-dahya} below).
We first provide some basic observations about which subsets are closed under finite joins.

\begin{prop}
\makelabel{prop:basic-some-subspaces-are-closed-under-finite-stream-addition:sig:article-str-problem-raj-dahya}
    Let $A \in \{ \SpCw(M), \SpCs(M), \SpHw(M), \SpHs(M) \}$.
    Then $A$ is closed under finite joins.
\end{prop}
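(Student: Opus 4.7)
The plan is to fix $n\in\ntrlpos$, an isometric partition $\quer{u} = (u_{1},\ldots,u_{n})$ of $\onematrix$, and functions $T_{1},\ldots,T_{n}\in A$, then verify that the candidate ${T\colonequals (\bigoplus_{i=1}^{n}T_{i})_{\quer{u}}}$ satisfies the axioms defining whichever of the four spaces $A$ we have chosen. Concretely, I need to establish three things: (i) $T(t)\in\OpSpaceC{\HilbertRaum}$ for every $t\in M$; (ii) in the case $A\in\{\SpHw(M),\SpHs(M)\}$, the function $T$ is a semigroup; (iii) $T$ is continuous in the relevant topology.

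For (i), I expect to use (P\ref{ax:partition:1}) to see that the summands $u_{i}T_{i}(t)u_{i}^{\ast}\xi$ are mutually orthogonal for any fixed $\xi\in\HilbertRaum$, since $\langle u_{i}x,\,u_{j}y\rangle = \langle x,\,u_{j}^{\ast}u_{i}y\rangle = \delta_{ij}\langle x,y\rangle$. Combined with the fact that $u_{i}$ is an isometry (a consequence of (P\ref{ax:partition:1})) and that $T_{i}(t)$ is a contraction, this gives $\|T(t)\xi\|^{2} = \sum_{i=1}^{n}\|T_{i}(t)u_{i}^{\ast}\xi\|^{2} \leq \sum_{i=1}^{n}\|u_{i}^{\ast}\xi\|^{2}$, and (P\ref{ax:partition:2}) rewrites the right-hand side as $\langle\sum_{i=1}^{n}u_{i}u_{i}^{\ast}\xi,\,\xi\rangle = \|\xi\|^{2}$.

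For (ii), I plan to multiply out $T(s)T(t)$ directly and invoke (P\ref{ax:partition:1}) to collapse the double sum: ${(\sum_{i}u_{i}T_{i}(s)u_{i}^{\ast})(\sum_{j}u_{j}T_{j}(t)u_{j}^{\ast}) = \sum_{i,j}u_{i}T_{i}(s)\delta_{ij}T_{j}(t)u_{j}^{\ast} = \sum_{i}u_{i}T_{i}(s)T_{i}(t)u_{i}^{\ast} = T(st)}$, and separately $T(1) = \sum_{i}u_{i}\onematrix u_{i}^{\ast} = \onematrix$ by (P\ref{ax:partition:2}). For (iii), continuity is inherited summand by summand: in the $\topWOT$-case, ${\langle T(t)\xi,\eta\rangle = \sum_{i=1}^{n}\langle T_{i}(t)u_{i}^{\ast}\xi,\,u_{i}^{\ast}\eta\rangle}$ is a finite sum of scalar functions continuous in $t$; in the $\topSOT$-case, ${T(t)\xi = \sum_{i=1}^{n}u_{i}T_{i}(t)u_{i}^{\ast}\xi}$ is a finite sum of orbits $t\mapsto T_{i}(t)u_{i}^{\ast}\xi$, each norm-continuous by hypothesis and then multiplied on the left by the bounded operator $u_{i}$.

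There is no real obstacle here; the argument is a straightforward orthogonality computation once (P\ref{ax:partition:1}) and (P\ref{ax:partition:2}) are in hand. The only mild subtlety is the clean separation of the four cases, which I would handle uniformly by proving (i) and (iii) for the ambient continuity-type ($\topWOT$ or $\topSOT$), and adding (ii) precisely when the hypothesis places us in $\SpHw(M)$ or $\SpHs(M)$.
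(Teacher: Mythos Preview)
Your proposal is correct and follows essentially the same route as the paper: verify contractivity via the orthogonality coming from (P\ref{ax:partition:1}) together with (P\ref{ax:partition:2}), establish the semigroup law by collapsing the double sum with (P\ref{ax:partition:1}) and checking $T(1)=\onematrix$ via (P\ref{ax:partition:2}), and observe that continuity (in either topology) is inherited summand by summand. If anything, your treatment of continuity is slightly more explicit than the paper's, which simply declares it clear.
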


    \begin{proof}
        First consider the case $A=\SpCw(M)$.
        Let $n\in\ntrlpos$,
        $\quer{u} \colonequals (u_{1},u_{2},\ldots,u_{n})$ be an isometric partition of $\onematrix$,
        and $T_{1},T_{2},\ldots,T_{n}\in A$.
        We need to show that ${T \colonequals (\bigoplus_{i=1}^{n}T_{i})_{\quer{u}}}$ is in $A$.
        Applying the properties of the partition yields

            \begin{mathe}[mc]{rcl}
                \|T(t)\xi\|^{2}
                &= &\sum_{i,j=1}^{n}
                        \BRAKET{u_{i}T_{i}(t)u_{i}^{\ast}\xi}{u_{j}T_{j}(t)u_{j}^{\ast}\xi}\\
                &\textoverset{(P\ref{ax:partition:1})}{=}
                    &\sum_{i=1}^{n}
                        \BRAKET{u_{i}T_{i}(t)u_{i}^{\ast}\xi}{u_{i}T_{i}(t)u_{i}^{\ast}\xi}\\
                &\leq
                    &\sum_{i=1}^{n}
                        \|u_{i}T_{i}(t)\|\|u_{i}^{\ast}\xi\|\\
                &\leq
                    &\sum_{i=1}^{n}
                        \|u_{i}^{\ast}\xi\|\\
                &\multispan{2}{\text{since $u_{i}$ is isometric and $T_{i}(t)$ contractive for all $i$}}\\
                &= &\BRAKET{\sum_{i=1}^{n}u_{i}u_{i}^{\ast}\xi}{\xi}\\
                &\textoverset{(P\ref{ax:partition:2})}{=}
                    &\BRAKET{\onematrix \xi}{\xi}
                = \|\xi\|^{2}\\
            \end{mathe}

        \continueparagraph
        for all $\xi\in\HilbertRaum$ and all $t\in M$.
        And since by construction, ${T(\cdot)=\sum_{i=1}^{n}u_{i}T_{i}(\cdot)u_{i}^{\ast}}$,
        it clearly holds that $T$ is $\topWOT$-continuous.
        Thus $T$ is a $\topWOT$-continuous contraction-valued function, \idest $T\in A$.
        Hence $A$ is closed under finite joins.
        The case of $A=\SpCs(M)$ is analogous.

        Next we consider the case $A=\SpHw(M)$.
        Let $n\in\ntrlpos$,
        $\quer{u} \colonequals (u_{1},u_{2},\ldots,u_{n})$ be an isometric partition of $\onematrix$,
        and $T_{1},T_{2},\ldots,T_{n}\in A$.
        We need to show that ${T \colonequals (\bigoplus_{i=1}^{n}T_{i})_{\quer{u}}}$ is in $A$.
        Since $A\subseteq\SpCw(M)$ and $\SpCw(M)$ is closed under finite joins,
        we already know that $T\in\SpCw(M)$, \idest that $T$ is contraction-valued and $\topWOT$-continuous.
        To show that $T\in A$, it remains to show that $T$ is a semigroup.
        Since each of the $T_{i}$ are semigroups, applying the properties of the partition
        yields

            \begin{mathe}[mc]{rcccccccl}
                T(1)
                &= &\sum_{i=1}^{n}u_{i}T_{i}(1)u_{i}^{\ast}
                &= &\sum_{i=1}^{n}u_{i}\cdot\onematrix\cdot u_{i}^{\ast}
                &= &\sum_{i=1}^{n}u_{i}u_{i}^{\ast}
                &\textoverset{(P\ref{ax:partition:2})}{=}
                    &\onematrix\\
            \end{mathe}

        \continueparagraph
        and

            \begin{mathe}[mc]{rcl}
                T(s)T(t)
                &= &(\sum_{i=1}^{n}u_{i}T_{i}(s)u_{i}^{\ast})(\sum_{j=1}^{n}u_{j}T_{j}(t)u_{j}^{\ast})\\
                &= &\sum_{i,j=1}^{n}u_{i}T_{i}(s)u_{i}^{\ast}\cdot u_{j} T_{j}(t)u_{j}^{\ast}\\
                &\textoverset{(P\ref{ax:partition:1})}{=}
                    &\sum_{i=1}^{n}u_{i}T_{i}(s)T_{i}(t)u_{i}^{\ast}\\
                &= &\sum_{i=1}^{n}u_{i}T_{i}(st)u_{i}^{\ast}\\
                &= &T(st)\\
            \end{mathe}

        \continueparagraph
        for all $s,t\in M$.
        Thus $T$ is a $\topWOT$-continuous contractive semigroup, \idest $T\in A$.
        Hence $A$ is closed under finite joins.
        The case of $A=\SpHs(M)$ is analogous.
    \end{proof}

\begin{prop}
\makelabel{prop:closure-is-convex:sig:article-str-problem-raj-dahya}
    Let $A\subseteq \SpCw(M)$ and
    let $\quer{A}$ be the closure of $A$ within $(\SpCw(M),\toplocWOT)$.
    If $A$ is closed under finite joins,
    then so too is $\quer{A}$.
\end{prop}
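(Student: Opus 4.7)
The plan is to isolate the ``join'' operation as a map and to establish its $\toplocWOT$-continuity; the closure property then follows formally. More precisely, for each $n \in \ntrlpos$ and each isometric partition $\quer{u} = (u_{1}, \ldots, u_{n})$ of $\onematrix$, I would consider the map $\Phi_{\quer{u}} : \SpCw(M)^{n} \to \SpCw(M)$ defined by $\Phi_{\quer{u}}(T_{1}, \ldots, T_{n}) = (\bigoplus_{i}T_{i})_{\quer{u}} = \sum_{i} u_{i} T_{i}(\cdot) u_{i}^{\ast}$. The well-definedness of the target space is already contained in the proof of the $A = \SpCw(M)$ case of \Cref{prop:basic-some-subspaces-are-closed-under-finite-stream-addition:sig:article-str-problem-raj-dahya}. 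The hypothesis on $A$ then reads $\Phi_{\quer{u}}(A^{n}) \subseteq A$, and the desired conclusion is $\Phi_{\quer{u}}(\quer{A}^{n}) \subseteq \quer{A}$.

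The key---and really the only---step is the $\toplocWOT$-continuity of $\Phi_{\quer{u}}$ with respect to the product topology on the domain. Here I would use the defining seminorms of $\toplocWOT$ together with a triangle-inequality estimate: for every $\xi, \eta \in \HilbertRaum$ and every $K \in \KmpRm{M}$, the matrix coefficient of $\Phi_{\quer{u}}(T_{1}, \ldots, T_{n})(t) - \Phi_{\quer{u}}(T_{1}', \ldots, T_{n}')(t)$ paired with $\xi, \eta$ expands linearly as $\sum_{i}\BRAKET{(T_{i}(t) - T_{i}'(t))u_{i}^{\ast}\xi}{u_{i}^{\ast}\eta}$, so that the sup over $t \in K$ is bounded by $\sum_{i=1}^{n}$ of the $\toplocWOT$-seminorms of $T_{i} - T_{i}'$ evaluated at the rotated test vectors $u_{i}^{\ast}\xi$ and $u_{i}^{\ast}\eta$ and the compact set $K$. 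This is a short routine estimate; no facts about isometries or partitions are needed beyond the identity $u_{j}^{\ast}u_{i} = \delta_{ij}\cdot\onematrix$ already exploited in \Cref{prop:basic-some-subspaces-are-closed-under-finite-stream-addition:sig:article-str-problem-raj-dahya}.

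With continuity of $\Phi_{\quer{u}}$ in hand, the conclusion is formal: since $\SpCw(M)^{n}$ carries the product topology we have $\quer{A^{n}} = \quer{A}^{n}$, and continuity yields $\Phi_{\quer{u}}(\quer{A}^{n}) = \Phi_{\quer{u}}(\quer{A^{n}}) \subseteq \quer{\Phi_{\quer{u}}(A^{n})} \subseteq \quer{A}$. As this holds for every $n \in \ntrlpos$ and every isometric partition $\quer{u}$, the closure $\quer{A}$ is itself closed under finite joins. Alternatively, one may argue directly with sequences, which is legitimate because $(\SpCw(M),\toplocWOT)$ is Polish by \Cref{prop:basic:SpC:basic-polish:sig:article-str-problem-raj-dahya}: approximate each $T_{k} \in \quer{A}$ by a sequence $(T_{k}^{(m)})_{m} \subseteq A$, form $S^{(m)} := \Phi_{\quer{u}}(T_{1}^{(m)},\ldots,T_{n}^{(m)}) \in A$, and verify term-by-term convergence via the same estimate. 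I do not anticipate any genuine obstacle---the structural work has been done in the preceding proposition, and what remains is only the observation that the join operation respects $\toplocWOT$-limits.
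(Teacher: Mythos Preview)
Your proposal is correct and essentially matches the paper's proof: the paper argues directly with nets, choosing approximating nets $(T_{j}^{(i)})_{i}\subseteq A$ for each $T_{j}\in\quer{A}$ and observing that $\sum_{j}u_{j}T_{j}^{(i)}(\cdot)u_{j}^{\ast}\overset{\tinytoplocWOT}{\longrightarrow}\sum_{j}u_{j}T_{j}(\cdot)u_{j}^{\ast}$, which is exactly your continuity of $\Phi_{\quer{u}}$ unpacked. One small caveat on your alternative: in this section $M$ is an \emph{arbitrary} topological monoid, so $(\SpCw(M),\toplocWOT)$ need not be Polish and the sequence version is not available in general---nets (as in your main argument and the paper's) are the right tool.
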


    \begin{proof}
        Let $n\in\ntrlpos$,
        $\quer{u} \colonequals (u_{1},u_{2},\ldots,u_{n})$ be an isometric partition of $\onematrix$,
        and $T_{1},T_{2},\ldots,T_{n}\in\quer{A}$.
        We need to show that ${T \colonequals (\bigoplus_{j=1}^{n}T_{j})_{\quer{u}}}$ is in $\quer{A}$.
        To see this, we may simply fix a net
            ${((T^{(i)}_{1},T^{(i)}_{2},\ldots,T^{(i)}_{n}))_{i} \subseteq \prod_{j=1}^{n}A}$
        such that ${T^{(i)}_{j}\underset{i}{\overset{\tinytoplocWOT}{\longrightarrow}} T_{j}}$
        for all $j\in\{1,2,\ldots,n\}$.
        Since $A$ is closed under finite joins,
        we have
            ${T^{(i)}\colonequals (\bigoplus_{j=1}^{n}T^{(i)}_{j})_{\quer{u}}\in A}$
        for all $i$. We also clearly have

                \begin{mathe}[mc]{rcccccl}
                    A \ni T^{(i)}
                    &= &\sum_{j=1}^{n}u_{j}T^{(i)}_{j}(\cdot)u_{j}^{\ast}
                    &\underset{i}{\overset{\tinytoplocWOT}{\longrightarrow}}
                        &\sum_{j=1}^{n}u_{j}T_{j}(\cdot)u_{j}^{\ast}
                        &= &T(\cdot).\\
                \end{mathe}

        \continueparagraph
        Hence $T\in\quer{A}$.
    \end{proof}

\begin{lemm}
\makelabel{lemm:closure-is-convex:sig:article-str-problem-raj-dahya}
    Let $A\subseteq \SpCw(M)$ be closed under finite joins.
    Then the closure, $\quer{A}$, of $A$ within $(\SpCw(M),\toplocWOT)$ is convex.
\end{lemm}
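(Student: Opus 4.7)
The plan is to leverage \Cref{prop:closure-is-convex:sig:article-str-problem-raj-dahya} — which has just established that $\quer{A}$ inherits closure under finite joins from $A$ — in order to exhibit, for each basic $\toplocWOT$-neighbourhood of $\lambda T_{1} + (1-\lambda) T_{2}$, a member of $\quer{A}$ that agrees \emph{exactly} with this convex combination on the neighbourhood's test vectors. Fix $T_{1}, T_{2} \in \quer{A}$ and $\lambda \in [0,1]$. I would first reduce to the case of rational $\lambda = k/n$: if $(\lambda_{m})_{m} \subseteq \rtnl \cap [0,1]$ converges to $\lambda$, then $\lambda_{m} T_{1} + (1-\lambda_{m}) T_{2}$ converges to $\lambda T_{1} + (1-\lambda) T_{2}$ in $\toplocWOT$ (on any test pair $(\xi, \eta)$ the error is bounded uniformly in $t \in M$ by $2|\lambda_{m} - \lambda|\|\xi\|\|\eta\|$, using contractivity), so closedness of $\quer{A}$ reduces the problem to rational $\lambda$.

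For rational $\lambda = k/n$, fix a basic neighbourhood prescribed by finitely many test vectors $\{(\xi_{l}, \eta_{l})\}_{l=1}^{N}$, a compact $K \subseteq M$ and $\eps > 0$, and set $F \colonequals \linspann\{\xi_{l}, \eta_{l}\}_{l} \subseteq \HilbertRaum$. The central construction exploits the isomorphism $\HilbertRaum \cong \HilbertRaum \otimes \kmplx^{n}$. Let $\zeta \colonequals (1/\sqrt{n})\sum_{j=1}^{n} e_{j} \in \kmplx^{n}$ be the uniform unit vector with respect to the standard basis. The map $\xi \mapsto \xi \otimes \zeta$ is an isometric embedding of the finite-dimensional subspace $F$ into $\HilbertRaum \otimes \kmplx^{n}$; since both $\HilbertRaum$ and $\HilbertRaum \otimes \kmplx^{n}$ are separable infinite-dimensional Hilbert spaces, this embedding extends to a unitary $\Phi : \HilbertRaum \to \HilbertRaum \otimes \kmplx^{n}$. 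Setting $W_{j}\xi \colonequals \xi \otimes e_{j}$ yields an isometric partition of the identity on $\HilbertRaum \otimes \kmplx^{n}$, whence $u_{j} \colonequals \Phi^{\ast} W_{j}$ is an isometric partition of $\onematrix_{\HilbertRaum}$. By the finite-join closure of $\quer{A}$, the function
\[
    S \colonequals \sum_{j=1}^{k} u_{j}\,T_{1}(\cdot)\,u_{j}^{\ast} + \sum_{j=k+1}^{n} u_{j}\,T_{2}(\cdot)\,u_{j}^{\ast}
\]
lies in $\quer{A}$.

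The verification hinges on the observation that for every $\xi \in F$,
\[
    u_{j}^{\ast}\xi \;=\; W_{j}^{\ast}\Phi\xi \;=\; W_{j}^{\ast}(\xi \otimes \zeta) \;=\; \zeta_{j}\,\xi \;=\; (1/\sqrt{n})\,\xi,
\]
so that each compression $u_{j}^{\ast}$ acts on the test subspace $F$ as the same scalar multiple $1/\sqrt{n}$ of the identity. Consequently, for all $\xi, \eta \in F$ and $t \in M$,
\[
    \BRAKET{S(t)\xi}{\eta} \;=\; \tfrac{k}{n}\BRAKET{T_{1}(t)\xi}{\eta} + \tfrac{n-k}{n}\BRAKET{T_{2}(t)\xi}{\eta} \;=\; \lambda\BRAKET{T_{1}(t)\xi}{\eta} + (1-\lambda)\BRAKET{T_{2}(t)\xi}{\eta},
\]
i.e.\ $S$ would match $\lambda T_{1} + (1-\lambda) T_{2}$ exactly on the prescribed test vectors, uniformly over $t \in M$. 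Hence every basic $\toplocWOT$-neighbourhood of $\lambda T_{1} + (1-\lambda) T_{2}$ meets $\quer{A}$, and by closedness $\lambda T_{1} + (1-\lambda) T_{2} \in \quer{A}$. The main conceptual obstacle — explaining why a direct-sum-type expression $\sum_{j} u_{j} T_{\sigma(j)} u_{j}^{\ast}$ should reproduce a genuine \emph{scalar} convex combination under the WOT — is resolved precisely by choosing the ambient unitary $\Phi$ so that the uniform vector $\zeta$ forces every compression $u_{j}^{\ast}$ to act identically (as $(1/\sqrt{n})\onematrix$) on the finite test subspace, thereby converting the finite join into a true average.
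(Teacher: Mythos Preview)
Your argument is correct. The core mechanism is the same as the paper's --- construct an isometric partition $(u_{j})$ so that every $u_{j}^{\ast}$ acts on the finite test subspace as a fixed scalar multiple of the identity, whence the finite join $\sum_{j} u_{j}T_{\sigma(j)}(\cdot)u_{j}^{\ast}$ reproduces the desired convex combination on all test pairs --- but the implementations diverge. The paper dispenses with your rational-approximation step entirely: for arbitrary $\alpha\in[0,1]$ it builds a \emph{two}-element partition by first choosing $(w_{0},w_{1})$ with $w_{0}$ fixing the test vectors in $F$ and then rotating via $u_{0}\colonequals\sqrt{\alpha}\,w_{0}+\sqrt{\beta}\,w_{1}$, $u_{1}\colonequals\sqrt{\beta}\,w_{0}-\sqrt{\alpha}\,w_{1}$, so that $u_{0}^{\ast}e=\sqrt{\alpha}\,e$ and $u_{1}^{\ast}e=\sqrt{\beta}\,e$ for $e\in F$; the join $(S\oplus T)_{(u_{0},u_{1})}$ then matches $\alpha S+\beta T$ exactly on test vectors. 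Your tensor-product realisation $\HilbertRaum\cong\HilbertRaum\otimes\kmplx^{n}$ with the uniform vector $\zeta$ is a clean and arguably more conceptual way to see where the partition comes from, at the cost of needing $n$ summands and the preliminary reduction to $\lambda\in\rtnl$; the paper's $2\times 2$ rotation trick is slicker and handles all $\alpha$ in one stroke, but is perhaps less transparent as to \emph{why} such a partition should exist.
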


    \begin{proof}
        Since $\HilbertRaum$ is a separable Hilbert space,
        it admits a countable orthonormal basis (ONB) ${B\subseteq\HilbertRaum}$,
        which we shall fix.
        It suffices to show for $S,T\in\quer{A}$ and $\alpha,\beta\in[0,1]$ with ${\alpha+\beta=1}$,
        that ${R \colonequals \alpha S + \beta T \in \quer{A}}$.
        To do this, we need to show that $R$ can be approximated within the $\toplocWOT$-topology
        by elements in $\quer{A}$.
        In order to achieve this, it suffices to
        fix arbitrary
            $K\in\KmpRm{M}$,
            $F\subseteq B$ finite,
        and $\eps>0$,
        and show that some $\tilde{R}\in\quer{A}$ exists
        satisfying

            \begin{mathe}[mc]{lql}
            \eqtag[eq:0:\beweislabel]
                \sup_{t\in K}|\BRAKET{(R(t)-\tilde{R}(t))e}{e'}|<\eps,
                &\text{for all $e,e'\in F$.}
            \end{mathe}

        To construct $\tilde{R}$, we first construct an isometric partition, $(w_{0},w_{1})$, of $\onematrix$,
        such that $w_{0}$ fixes the vectors in $F$.
        This can be achieved as follows:
        Since the ONB $B$ is infinite, a partition $\{B_{0},B_{1}\}$ of $B$
        exists satisfying $F\subseteq B_{0}$ and $|B_{0}|=|B_{1}|=|B|=\dim(\HilbertRaum)$.
        There thus exist bijections ${f_{0}:B\to B_{0}}$ and ${f_{1}:B\to B_{1}}$
        and since $B_{0}\supseteq F$, we may assume without loss of generality that $f_{0}\restr{F}=\id_{F}$.
        Using these bijections we obtain (unique) isometries
            ${w_{0},w_{1}\in\BoundedOps{\HilbertRaum}}$
        satisfying ${w_{0}e=f_{0}(e)}$ and ${w_{1}e=f_{1}(e)}$ for all $e\in B$.
        In particular,
            $\ran(w_{0})=\quer{\linspann}(B_{0})$
        and $\ran(w_{1})=\quer{\linspann}(B_{1})$.
        Now since $\{B_{0},B_{1}\}$ partitions $B$,
        we have $\HilbertRaum=\quer{\linspann}(B)=\quer{\linspann}(B_{0})\oplus\quer{\linspann}(B_{1})$.
        As per \Cref{rem:isometric-partitions-of-one-correspond-to-decompositions:sig:article-str-problem-raj-dahya}
        it follows that
            $(w_{0},w_{1})$
        satisfies the axioms of an isometric partition of $\onematrix$.

        Now set

            \begin{mathe}[mc]{rclcrcl}
                u_{0} &\colonequals &\sqrt{\alpha}w_{0} + \sqrt{\beta}w_{1}
                &\text{and}
                &u_{1} &\colonequals &\sqrt{\beta}w_{0} - \sqrt{\alpha}w_{1}.\\
            \end{mathe}

        \continueparagraph
        One can easily derive from the fact that $(w_{0},w_{1})$ is an isometric partition of $\onematrix$,
        that $\quer{u} \colonequals (u_{0},u_{1})$ also satisfies the axioms of an isometric partition of $\onematrix$.
        Moreover, since $w_{0}$ was chosen to fix the vectors in $F$,
        applying the properties of the partition $(w_{0},w_{1})$ yields

            \begin{mathe}[mc]{rcccccccl}
            \eqtag[eq:sqrt-alpha:\beweislabel]
                u_{0}^{\ast}e
                    &= &u_{0}^{\ast}w_{0}e
                    &= &\sqrt{\alpha}w_{0}^{\ast}w_{0}e+\sqrt{\beta}w_{1}^{\ast}w_{0}e
                    &\textoverset{(P\ref{ax:partition:1})}{=}
                        &\sqrt{\alpha}\onematrix e+\sqrt{\beta}\zeromatrix e
                    &= &\sqrt{\alpha}e\\
                u_{1}^{\ast}e
                    &= &u_{1}^{\ast}w_{0}e
                    &= &\sqrt{\beta}w_{0}^{\ast}w_{0}e-\sqrt{\alpha}w_{1}^{\ast}w_{0}e
                    &\textoverset{(P\ref{ax:partition:1})}{=}
                        &\sqrt{\beta}\onematrix e-\sqrt{\alpha}\zeromatrix e
                    &= &\sqrt{\beta}e\\
            \end{mathe}

        \continueparagraph
        for all $e\in F$.

        Finally set $\tilde{R}\colonequals(S\bigoplus T)_{\quer{u}}$.
        Since $A$ is closed under finite joins,
        by \Cref{prop:closure-is-convex:sig:article-str-problem-raj-dahya}
        $\quer{A}$ is also closed under finite joins,
        and hence the constructed operator-valued function, $\tilde{R}$, lies in $\quer{A}$.
        For all $e,e'\in F$ we obtain

            \begin{mathe}[mc]{rcl}
                \BRAKET{(\tilde{R}(t)-R(t))e}{e'}
                &= &\BRAKET{u_{0}S(t)u_{0}^{\ast}e}{e'}
                    + \BRAKET{u_{1}T(t)u_{1}^{\ast}e}{e'}
                    - \BRAKET{R(t)e}{e'}\\
                &= &\BRAKET{S(t)u_{0}^{\ast}e}{u_{0}^{\ast}e'}
                    + \BRAKET{T(t)u_{1}^{\ast}e}{u_{1}^{\ast}e'}
                    - \BRAKET{(\alpha S(t) + \beta T(t))e}{e'}\\
                &\eqcrefoverset{eq:sqrt-alpha:\beweislabel}{=}
                    &\BRAKET{S(t)\sqrt{\alpha}e}{\sqrt{\alpha}e'}
                    + \BRAKET{T(t)\sqrt{\beta}e}{\sqrt{\beta}e'}
                    - \BRAKET{(\alpha S(t) + \beta T(t))e}{e'}\\
                &= &0\\
            \end{mathe}

        \continueparagraph
        for all $t\in M$.
        In particular we have found an $\tilde{R}\in\quer{A}$
        which clearly satisfies \eqcref{eq:0:\beweislabel}.

        This establishes that the convex hull of $\quer{A}$ is contained in $\quer{A}$.
        Thus $\quer{A}$ is convex.
    \end{proof}

By \Cref{prop:basic-some-subspaces-are-closed-under-finite-stream-addition:sig:article-str-problem-raj-dahya}
and \Cref{lemm:closure-is-convex:sig:article-str-problem-raj-dahya} we thus immediately obtain the general result:

\begin{cor}
\makelabel{cor:closure-of-Hs-is-convex:sig:article-str-problem-raj-dahya}
    For all topological monoids, $M$,
    the closure of $\SpHs(M)$ within $(\SpCw(M),\toplocWOT)$ is convex.
\end{cor}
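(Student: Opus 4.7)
The statement follows essentially immediately from the two results that have just been established, so my plan is to produce a short combinatorial invocation rather than any new construction. Concretely, I would set $A \colonequals \SpHs(M)$, observe by \Cref{prop:basic-some-subspaces-are-closed-under-finite-stream-addition:sig:article-str-problem-raj-dahya} that $A$ is closed under finite joins (this is exactly the case $A = \SpHs(M)$ treated there), and then apply \Cref{lemm:closure-is-convex:sig:article-str-problem-raj-dahya} to conclude that the closure $\quer{A}$ of $A$ inside $(\SpCw(M),\toplocWOT)$ is convex.

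There is really no obstacle here, since the work was already done in establishing the preceding lemma: the nontrivial step was the construction, for given $S,T\in\quer{A}$ and convex coefficients $\alpha,\beta$, of an isometric partition $(u_{0},u_{1})$ of $\onematrix$ satisfying $u_{0}^{\ast}e=\sqrt{\alpha}e$ and $u_{1}^{\ast}e=\sqrt{\beta}e$ on a prescribed finite set $F$ of ONB vectors, from which $(S\oplus T)_{\quer{u}}$ witnessed that the convex combination $\alpha S+\beta T$ can be $\toplocWOT$-approximated by elements of $\quer{A}$. That construction does not depend on the specific shape of $A$ beyond closure under finite joins, so no further verification is required at the level of the corollary.

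If I were writing the proof in full, it would amount to a single sentence: applying \Cref{prop:basic-some-subspaces-are-closed-under-finite-stream-addition:sig:article-str-problem-raj-dahya} with $A=\SpHs(M)$ gives closure under finite joins, and then \Cref{lemm:closure-is-convex:sig:article-str-problem-raj-dahya} directly yields convexity of the $\toplocWOT$-closure in $\SpCw(M)$. Since the topological monoid $M$ is arbitrary in both preceding results, the conclusion holds for all topological monoids $M$, as claimed.
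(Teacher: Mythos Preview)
Your proposal is correct and matches the paper's own approach exactly: the corollary is stated immediately after noting that it follows from \Cref{prop:basic-some-subspaces-are-closed-under-finite-stream-addition:sig:article-str-problem-raj-dahya} and \Cref{lemm:closure-is-convex:sig:article-str-problem-raj-dahya}, with no additional argument given. Your one-sentence derivation is precisely what the paper intends.
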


We now provide a large class of topological monoids, $M$, for which $\SpHs(M)$ is not convex.

\begin{defn}
    Denote by $\onematrix(\cdot)$
    the trivial $\topSOT$-continuous semigroup over $\HilbertRaum$ on $M$,
    which is everywhere equal to the identity operator.
    Say that $M$ has \highlightTerm{non-trivial unitary semigroups} over $\HilbertRaum$,
    just in case there exists a unitary $\topSOT$-continuous semigroup, $U$,
    over $\HilbertRaum$ on $M$
    with ${U \neq \onematrix(\cdot)}$.
\end{defn}

\begin{defn}
    Let $(G,\cdot,{}^{-1},1)$ be a topological group.
    Say that $M\subseteq G$ is a \highlightTerm{topological submonoid},
    just in case $M$ is endowed with the subspace topology,
    contains the neutral element $1$, and is closed under $\cdot$.
\end{defn}

In particular, if $G$ is a topological group and $M\subseteq G$ is a topological submonoid,
then $M$ is itself a topological monoid.

\begin{prop}
\makelabel{prop:gelfand-raikov-monoids:sig:article-str-problem-raj-dahya}
    Let $G$ be a locally compact Polish group and suppose that
    $M\subseteq G$ is a topological submonoid with $M\neq\{1\}$.
    Then $M$ has non-trivial unitary semigroups over $\HilbertRaum$.
\end{prop}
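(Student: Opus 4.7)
The plan is to produce, via the Gelfand-Raikov theorem, a strongly continuous unitary representation of $G$ on $\HilbertRaum$ that is non-trivial at some point of $M$, and then restrict it to $M$. Pick $m_{0} \in M \setminus \{1\}$, which is possible since $M \neq \{1\}$. The Gelfand-Raikov theorem, applicable because $G$ is a locally compact Hausdorff group, then yields a strongly continuous unitary representation $\pi: G \to \OpSpaceU{\HilbertRaum_{0}}$ on some Hilbert space $\HilbertRaum_{0}$ with $\pi(m_{0}) \neq \onematrix$. Since $G$ is Polish and hence second countable, we may assume $\HilbertRaum_{0}$ is separable: namely, fixing $\xi \in \HilbertRaum_{0}$ with $\pi(m_{0})\xi \neq \xi$ and restricting to the closed $\pi$-cyclic subspace $\quer{\linspann}\{\pi(g)\xi \mid g \in G\}$ yields a subrepresentation on a separable Hilbert space which is still non-trivial at $m_{0}$.

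To transfer to the fixed separable infinite-dimensional Hilbert space $\HilbertRaum$, I adjust the dimension. Let $\HilbertRaum_{1}$ be any separable infinite-dimensional Hilbert space and define the direct-sum representation $\tilde{\pi}(g) \colonequals \pi(g) \oplus \onematrix_{\HilbertRaum_{1}}$ on $\HilbertRaum_{0} \oplus \HilbertRaum_{1}$. This representation is strongly continuous and unitary, satisfies $\tilde{\pi}(m_{0}) \neq \onematrix$, and acts on a separable infinite-dimensional Hilbert space. Since any two such spaces are unitarily isomorphic, there exists a unitary $V: \HilbertRaum_{0} \oplus \HilbertRaum_{1} \to \HilbertRaum$, and I set $\rho(g) \colonequals V\tilde{\pi}(g)V^{\ast}$ for $g \in G$, obtaining a strongly continuous unitary representation $\rho: G \to \OpSpaceU{\HilbertRaum}$ with $\rho(m_{0}) \neq \onematrix$.

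Finally, define $U \colonequals \rho\restr{M}$. Since $M \subseteq G$ is a topological submonoid, $U$ inherits from $\rho$ the conditions $U(1) = \onematrix$, $U(st) = U(s)U(t)$ for $s,t \in M$, unitarity, and $\topSOT$-continuity, so that $U$ is a $\topSOT$-continuous unitary semigroup over $\HilbertRaum$ on $M$. The condition $U(m_{0}) = \rho(m_{0}) \neq \onematrix$ then guarantees $U \neq \onematrix(\cdot)$, as required. The only substantive ingredient is the invocation of Gelfand-Raikov; the remaining steps are routine adjustments of the underlying Hilbert space, so no serious obstacle is anticipated.
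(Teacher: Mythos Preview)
Your proof is correct and follows essentially the same approach as the paper: invoke Gelfand--Raikov to obtain a strongly continuous unitary representation of $G$ non-trivial at a chosen $m_{0}\in M\setminus\{1\}$, adjust the underlying Hilbert space to match $\HilbertRaum$, and restrict to $M$. The only cosmetic difference is that the paper uses irreducibility of the Gelfand--Raikov representation (together with separability of $G$) to bound $\dim(\HilbertRaum_{0})$, whereas you pass to a cyclic subrepresentation; both routes are equally valid.
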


    \begin{proof}
        Let $t_{0}\in M\ohne\{1\}$.
        By the Gelfand-Raikov theorem
            (see \exempli \cite[Theorem~6]{yoshi1949}),
        there exists a Hilbert space $\HilbertRaum_{0}$
        and an irreducible $\topSOT$-continuous unitary representation,
            ${U_{0}:G\to\OpSpaceU{\HilbertRaum_{0}}}$,
        satisfying $U_{0}(t_{0})\neq \onematrix_{\HilbertRaum_{0}}$.
        By irreducibility and since $G$ is separable,
        it necessarily holds that $\dim(\HilbertRaum_{0})\leq\aleph_{0}=\dim(\HilbertRaum)$.
        If $\dim(\HilbertRaum_{0})=\dim(\HilbertRaum)$,
        then we may assume without loss of generality that $\HilbertRaum_{0}=\HilbertRaum$,
        and thus that $U_{0}$ is a representation of $G$ over $\HilbertRaum$.
        If $\dim(\HilbertRaum_{0})$ is finite,
        we may assume that $\HilbertRaum_{0}\subset\HilbertRaum$
        and view the orthogonal complement $\HilbertRaum_{1}\colonequals\HilbertRaum_{0}^{\perp}$
        within $\HilbertRaum$.
        Replacing $U_{0}$ by
            ${t\in G\mapsto U_{0}(t)\oplus \onematrix_{\HilbertRaum_{1}}}$
        yields an $\topSOT$-continuous unitary representation of $G$ over $\HilbertRaum$
        which satisfies $U_{0}(t_{0})\neq\onematrix_{\HilbertRaum}$.
        In both cases, restricting $U_{0}$ to $M$
        yields a non-trivial $\topSOT$-continuous unitary semigroup over $\HilbertRaum$ on $M$.
        Hence $M$ has non-trivial unitary semigroups over $\HilbertRaum$.
    \end{proof}

\begin{lemm}
\makelabel{lemm:non-trivial-Hs-is-nonconvex:sig:article-str-problem-raj-dahya}
    Suppose that $M$ has non-trivial unitary semigroups over $\HilbertRaum$.
    Then $\SpHs(M)$ is not a convex subset of $\SpCw(M)$.
\end{lemm}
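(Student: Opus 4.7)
The plan is to exhibit two concrete elements of $\SpHs(M)$ whose midpoint already fails to be a semigroup. A natural candidate is to mix a non-trivial unitary semigroup with the constant semigroup $\onematrix(\cdot)$, since the resulting convex combination is manifestly in $\SpCw(M)$ but has no obvious reason to preserve multiplicativity.

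First I would invoke the hypothesis to fix a $\topSOT$-continuous unitary semigroup $U\in\SpHs(M)$ and some $t_{0}\in M$ with $U(t_{0})\neq\onematrix$, and then set $T \colonequals \tfrac{1}{2}U + \tfrac{1}{2}\onematrix(\cdot)$. Since both summands lie in $\SpHs(M)\subseteq\SpCw(M)$, and since contraction-valuedness together with $\topWOT$-continuity is preserved under convex combinations, the function $T$ lies in $\SpCw(M)$ (even in $\SpCs(M)$). So $T$ is a bona fide convex combination of elements of $\SpHs(M)$ inside the ambient space $\SpCw(M)$, and all that remains is to show that $T$ itself is not a semigroup.

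The next step, which is the heart of the argument, is to examine the obstruction at $t_{0}$. Using that $U$ is a semigroup, so that $U(t_{0}\cdot t_{0})=U(t_{0})^{2}$, a direct expansion gives $T(t_{0})^{2}-T(t_{0}\cdot t_{0})=-\tfrac{1}{4}(U(t_{0})-\onematrix)^{2}$. If $T$ were a semigroup, this difference would have to vanish, forcing $(U(t_{0})-\onematrix)^{2}=0$. However, $U(t_{0})-\onematrix$ is normal (being the difference of a unitary operator and the identity, which commute), and for a normal operator $A$ the $C^{\ast}$-identity $\|A^{2}\|=\|A\|^{2}$ forces $A^{2}=0$ to imply $A=0$. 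Hence $U(t_{0})=\onematrix$, contradicting the choice of $t_{0}$, so $T\notin\SpHs(M)$ and $\SpHs(M)$ is not convex in $\SpCw(M)$.

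The only minor subtlety I anticipate is the observation that a normal operator whose square vanishes must itself vanish, which is a standard consequence of the $C^{\ast}$-identity and therefore not a real obstacle. Beyond that, the argument is just packaging the algebraic identity $T(t_{0})^{2}-T(t_{0}^{2})=-\tfrac{1}{4}(U(t_{0})-\onematrix)^{2}$ into a clean contradiction; no approximation or topological input is needed, and the resulting counterexample is explicit.
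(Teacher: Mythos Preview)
Your proof is correct and follows essentially the same route as the paper's: mix the trivial semigroup $\onematrix(\cdot)$ with a non-trivial unitary one, reduce the semigroup law at $t_{0}$ to $(\onematrix-U(t_{0}))^{2}=\zeromatrix$, and conclude via normality that $U(t_{0})=\onematrix$. The only cosmetic differences are that the paper carries a general convex combination $\alpha S+\beta T$ with $\alpha,\beta\in(0,1)$ (rather than just the midpoint) and phrases the final step via the spectral mapping theorem together with Gelfand's theorem instead of the $C^{\ast}$-identity $\|A^{2}\|=\|A\|^{2}$ for normal $A$.
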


    \begin{proof}
        Let ${S\colonequals\onematrix(\cdot)}$ be the trivial $\topSOT$-continuous unitary semigroup over $\HilbertRaum$ on $M$.
        And by non-triviality we may fix some $\topSOT$-continuous unitary semigroup ${T\in\SpHs(M)\ohne\{\onematrix(\cdot)\}}$.
        In particular, $T(t_{0})\neq\onematrix$ for some $t_{0}\in M$, which we shall fix.
        Choose any $\alpha,\beta\in(0,1)$ with $\alpha+\beta=1$.
        It suffices to show that $R \colonequals \alpha S + \beta T \notin \SpHs(M)$.
        Suppose \textit{per contra} that $R \in \SpHs(M)$.
        Then by the semigroup law we have

            \begin{mathe}[mc]{rcl}
                \zeromatrix &= &R(st)-R(s)R(t)\\
                    &= &\big(\alpha S(st)+\beta T(st)\big)
                        -\big(\alpha S(s)+\beta T(s)\big)
                        \big(\alpha S(t)+\beta T(t)\big)\\
                    &= &\big(\alpha S(s)S(t)+\beta T(s)T(t)\big)
                        -\big(\alpha S(s)+\beta T(s)\big)
                        \big(\alpha S(t)+\beta T(t)\big)\\
                    &= &\alpha(1-\alpha)\,S(s)S(t)
                        +\beta(1-\beta)\,T(s)T(t)
                        -\alpha\beta\,S(s)T(t)
                        -\beta\alpha\,T(s)S(t)\\
                    &= &\alpha\beta\,S(s)S(t)
                        +\beta\alpha\,T(s)T(t)
                        -\alpha\beta\,S(s)T(t)
                        -\beta\alpha\,T(s)S(t)\\
                    &= &\alpha\beta\,(S(s)-T(s))(S(t)-T(t)).\\
            \end{mathe}

        \continueparagraph
        for all $s,t\in M$.
        Since $\alpha,\beta \neq 0$ setting $s\colonequals t\colonequals t_{0}$ in the above yields

            \begin{mathe}[mc]{rcl}
                (\onematrix - u)^{2} &= &\zeromatrix,\\
            \end{mathe}

        \continueparagraph
        where $u\colonequals T(t_{0})\in\OpSpaceU{\HilbertRaum}$.
        Since $u$ is unitary, a basic application of the spectral mapping theorem
        yields that the spectrum of $u$ is $\{1\}$.
        By the Gelfand theorem
            (see \cite[Theorem~2.1.10]{murphy1990}),
        it follows that $T(t_{0})=u=\onematrix$, which is a contradiction.
    \end{proof}

Applying
    \Cref{cor:closure-of-Hs-is-convex:sig:article-str-problem-raj-dahya},
    \Cref{prop:gelfand-raikov-monoids:sig:article-str-problem-raj-dahya},
and
    \Cref{lemm:non-trivial-Hs-is-nonconvex:sig:article-str-problem-raj-dahya}
yields:

\begin{cor}
\makelabel{cor:broad-class-of-counterexamples-to-Hs-closed:sig:article-str-problem-raj-dahya}
    Let $G$ be a locally compact Polish group and suppose that
    $M\subseteq G$ is a topological submonoid with $M\neq\{1\}$.
    Then the closure of $\SpHs(M)$ within $(\SpCw(M),\toplocWOT)$ is convex,
    whilst $\SpHs(M)$ itself is not convex.
    In particular, $\SpHs(M)$ is not a closed subspace within $(\SpCw(M),\toplocWOT)$.
\end{cor}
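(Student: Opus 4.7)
The plan is to assemble the three preceding results without any new computation. First, \Cref{cor:closure-of-Hs-is-convex:sig:article-str-problem-raj-dahya} already asserts convexity of the closure of $\SpHs(M)$ within $(\SpCw(M),\toplocWOT)$ for \emph{any} topological monoid $M$, so this half of the conclusion needs no further argument under the stronger hypotheses of the corollary; I would simply cite it.

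Second, for the non-convexity of $\SpHs(M)$ itself, I would invoke \Cref{prop:gelfand-raikov-monoids:sig:article-str-problem-raj-dahya}, whose hypotheses on $G$ and on $M\subseteq G$ with $M\neq\{1\}$ match exactly those of the corollary; this produces a non-trivial $\topSOT$-continuous unitary semigroup over $\HilbertRaum$ on $M$, \idest it verifies the running hypothesis of \Cref{lemm:non-trivial-Hs-is-nonconvex:sig:article-str-problem-raj-dahya}. Feeding the output directly into that lemma then furnishes elements $S,T\in\SpHs(M)$ and scalars $\alpha,\beta\in(0,1)$ with $\alpha+\beta=1$ such that $\alpha S+\beta T\notin\SpHs(M)$, certifying that $\SpHs(M)$ is not convex.

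Third, the \emph{in particular} clause is a one-line observation: were $\SpHs(M)$ $\toplocWOT$-closed in $\SpCw(M)$, it would equal its own closure, which by the first step is convex; this contradicts the second step, so $\SpHs(M)$ is not closed.

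No genuine obstacle arises, since the substantive content has been packaged into the three named results preceding the corollary. The only point warranting a moment's care is checking that the ambient affine structure in which \usesinglequotes{convex} is being asserted agrees across the three invocations, but this is automatic: both $\SpHs(M)$ and its closure sit inside the common ambient vector space $\SpCw(M)$, and the convex combinations used in \Cref{lemm:non-trivial-Hs-is-nonconvex:sig:article-str-problem-raj-dahya} are pointwise-in-$t$ convex combinations of contraction-valued functions, which remain contraction-valued and $\topWOT$-continuous.
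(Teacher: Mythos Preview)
Your proposal is correct and matches the paper's approach exactly: the paper simply states that the corollary follows by applying \Cref{cor:closure-of-Hs-is-convex:sig:article-str-problem-raj-dahya}, \Cref{prop:gelfand-raikov-monoids:sig:article-str-problem-raj-dahya}, and \Cref{lemm:non-trivial-Hs-is-nonconvex:sig:article-str-problem-raj-dahya}, with no further argument. Your added remark deriving the \emph{in particular} clause (closed $\Rightarrow$ equal to its convex closure $\Rightarrow$ convex, contradiction) is the intended one-line inference the paper leaves implicit.
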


Considering $G=(\reals^{d},+,\zerovector)$ with $d\geq 1$
and ${M \colonequals \realsNonNeg^{d} \subseteq G}$,
the conditions of \Cref{cor:broad-class-of-counterexamples-to-Hs-closed:sig:article-str-problem-raj-dahya} are satisfied.
In particular, the subspace of one-/multiparameter $\C0$-semigroups
is not closed within the larger space of $\topWOT$-continuous contraction-valued functions.



\section[Complete metrisability results]{Complete metrisability results}
\label{sec:results}

\firstparagraph
As indicated in the introduction, we shall demonstrate the complete metrisability of $\SpHs(M)$
by directly classifying its Borel complexity within
the larger Polish space, $(\SpCw(M),\toplocWOT)$.
By the previous section we know that in a very general setting,
    $\SpHs(M)$ is not closed within $(\SpCw(M),\toplocWOT)$.
Hence we require weaker conditions,
which determine when subspaces are completely metrisable.
For this we rely on the following classical result from descriptive set theory
(see
    \cite[Theorem~3.11]{kech1994}
for a proof):

\begin{lemm*}[Alexandroff's lemma]
    Let $\RaumX$ be a completely metrisable space.
    Then $A\subseteq\RaumX$ viewed with the relative topology is completely metrisable
    if and only if it is a $G_{\delta}$-subset of $\RaumX$.
\end{lemm*}

We now present the main result.

\begin{schattierteboxdunn}[%
    backgroundcolor=leer,%
    nobreak=true,%
]
\begin{thm}
    \makelabel{thm:main-result:sig:article-str-problem-raj-dahya}
    Let $\HilbertRaum$ denote a separable infinite-dimensional Hilbert space
    and $M$ be a locally compact Polish monoid.
    If $M$ is \usesinglequotes{good}, then
        the space $\SpHs(M)$ of contractive $\C0$-semigroups over $\HilbertRaum$ on $M$
    is Polish under the $\toplocWOT$-topology.
\end{thm}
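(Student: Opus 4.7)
The plan is to invoke Alexandroff's lemma. By \Cref{prop:basic:SpC:basic-polish:sig:article-str-problem-raj-dahya}, the ambient space $(\SpCw(M),\toplocWOT)$ is Polish, and by the \usesinglequotes{good} assumption $\SpHw(M)=\SpHs(M)$. Hence it suffices to demonstrate that $\SpHw(M)$ is a $G_{\delta}$-subset of $(\SpCw(M),\toplocWOT)$.

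I first encode membership in $\SpHw(M)$ by countably many equations. Fix a countable dense subset $D\subseteq M$ containing $1$, and a countable dense subset $B\subseteq\HilbertRaum$. My claim is that a given $T\in\SpCw(M)$ lies in $\SpHw(M)$ if and only if $T(1)=\onematrix$ and $\BRAKET{T(st)\xi}{\eta}=\BRAKET{T(s)T(t)\xi}{\eta}$ for all $s,t\in D$ and $\xi,\eta\in B$. The forward implication is immediate. For the reverse, the crucial observation is that for any $T\in\SpCw(M)$ and fixed $\xi,\eta\in\HilbertRaum$, the function $(s,t)\mapsto\BRAKET{T(st)\xi}{\eta}$ is \emph{jointly} continuous on $M\times M$, whereas $(s,t)\mapsto\BRAKET{T(s)T(t)\xi}{\eta}=\BRAKET{T(t)\xi}{T(s)^{\ast}\eta}$ is only \emph{separately} WOT-continuous. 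A two-step density extension---first in the variable $s$ for each fixed $t\in D$, and then in $t$ for each fixed $s\in M$---propagates equality from $D\times D$ to all of $M\times M$. Passage from dense $\xi,\eta\in B$ to all of $\HilbertRaum$ is routine via uniform boundedness of the operators.

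Next I classify the complexity of each such equation. The condition $T(1)=\onematrix$ is closed, since evaluation at $1$ is $\toplocWOT$-continuous and the WOT is Hausdorff. Fix $s,t\in D$ and $\xi,\eta\in B$. The map $T\mapsto\BRAKET{T(st)\xi}{\eta}$ is $\toplocWOT$-continuous. For the multiplication term, I fix an orthonormal basis $(e_{i})_{i\in\ntrl}$ of $\HilbertRaum$ and expand $\BRAKET{T(s)T(t)\xi}{\eta}=\sum_{i=1}^{\infty}\BRAKET{T(s)e_{i}}{\eta}\BRAKET{T(t)\xi}{e_{i}}$, which converges absolutely via the Cauchy--Schwarz inequality combined with the contractivity of $T(s)^{\ast}$ and $T(t)$. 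Each partial sum is $\toplocWOT$-continuous in $T$ as a finite sum of products of continuous evaluations, so the full sum $T\mapsto\BRAKET{T(s)T(t)\xi}{\eta}$ is of Baire class~1. Consequently the zero set $\{T\in\SpCw(M):\BRAKET{(T(st)-T(s)T(t))\xi}{\eta}=0\}$, being the preimage of a closed subset of $\kmplx$ under a Baire class~1 function, is $G_{\delta}$.

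Taking the countable intersection of these $G_{\delta}$-sets over $(s,t,\xi,\eta)\in D\times D\times B\times B$, together with the closed set $\{T:T(1)=\onematrix\}$, exhibits $\SpHw(M)=\SpHs(M)$ as a $G_{\delta}$-subset of the Polish space $(\SpCw(M),\toplocWOT)$, and Alexandroff's lemma then delivers the theorem. The main obstacle is the Baire class~1 argument for the multiplication map $T\mapsto T(s)T(t)$: WOT-multiplication is only separately continuous on bounded sets, so $\SpHs(M)$ cannot be expected to be closed---indeed it fails to be so in general, by \Cref{cor:broad-class-of-counterexamples-to-Hs-closed:sig:article-str-problem-raj-dahya}---and the ONB expansion is precisely what rescues the descriptive-set-theoretic complexity computation.
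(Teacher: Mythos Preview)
Your proof is correct and takes a genuinely different route from the paper's. Both arguments exhibit $\SpHs(M)$ as a $G_{\delta}$-subset of $(\SpCw(M),\toplocWOT)$ and invoke Alexandroff's lemma, and both ultimately approximate the product $T(s)T(t)$ by finite-rank truncations $T(s)\pi_{F}T(t)$ (your partial ONB sums are exactly $\langle T(s)\pi_{F}T(t)\xi,\eta\rangle$ for $F=\{e_{1},\ldots,e_{N}\}$). The implementations, however, diverge. The paper works with \emph{uniform} estimates over compact $K\subseteq M$: it defines sets $W_{\eps;K,F,e,e'}=\{T:\sup_{s,t\in K}|\langle(T(s)\pi_{F}T(t)-T(st))e,e'\rangle|<\eps\}$, establishes their $\toplocWOT$-openness via a careful $\eps$-computation, and then sandwiches $\SpHs(M)\subseteq(\text{$G_{\delta}$-set})\subseteq\SpHw(M)$, using Dini's theorem (which requires $\topSOT$-continuity of $T$) for the first inclusion and a $\liminf$ argument for the second. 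You instead test the semigroup identity only \emph{pointwise} at a countable dense $D\times D\subseteq M\times M$, extend to all of $M\times M$ by the two-step separate-continuity argument, and read off the complexity of each pointwise condition from the Baire class~1 framework. This buys you a shorter argument---no Dini, no openness verification---and a mild strengthening: your proof shows directly that $(\SpHw(M),\toplocWOT)$ is Polish for \emph{every} locally compact Polish monoid $M$, with the \usesinglequotes{good} hypothesis entering only at the end to identify $\SpHw(M)$ with $\SpHs(M)$. The paper's sandwich, by contrast, genuinely consumes $\topSOT$-continuity in its first inclusion.
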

\end{schattierteboxdunn}

    \begin{proof}
        Since $\{1\}$ is a compact subset of $M$, it is easy to see that

            \begin{mathe}[mc]{c}
                \RaumX \colonequals \{T\in\SpCw(M) \mid T(1)=\onematrix\}\\
            \end{mathe}

        \continueparagraph
        is a closed subset in $(\SpCw(M),\toplocWOT)$
        and thus $(\RaumX,\toplocWOT)$ is Polish
        (\cf \Cref{prop:basic:SpC:basic-polish:sig:article-str-problem-raj-dahya}).
        By Alexandroff's lemma it thus suffices to prove that $\SpHs(M)$ is a $G_{\delta}$-subset of $\RaumX$.

        To proceed, observe that since $M$ is a locally compact Polish space, it is $\sigma$-compact,
        \idest there exists a countable collection of compact subsets,
        ${\compactcover\subseteq\KmpRm{M}}$,
        such that ${\bigcup_{K\in\compactcover}K=M}$.
        \Withoutlog one may assume that $\compactcover$ is closed under finite unions.
        Since $\HilbertRaum$ is a separable Hilbert space,
        it admits a countable ONB ${B\subseteq\HilbertRaum}$.
        For each finite $F\subseteq B$ define

            \begin{mathe}[mc]{c}
                \pi_{F} \colonequals \Proj_{\linspann(F)} \in \BoundedOps{\HilbertRaum},\\
            \end{mathe}

        \continueparagraph
        \idest, the projection onto the closed subspace generated by $F$.
        Using these, we construct

            \begin{mathe}[mc]{c}
                d_{K,F,e,e'}(T) \colonequals {\displaystyle\sup_{s,t\in K}}|\BRAKET{(T(s)\pi_{F}T(t)-T(st))e}{e'}|\\
            \end{mathe}

        \continueparagraph
        for each
            ${K\in\KmpRm{M}}$,
            ${F\subseteq B}$ finite,
            ${e,e'\in\HilbertRaum}$,
            and
            ${T\in\RaumX}$,
        and

            \begin{mathe}[mc]{rcl}
            \eqtag[eq:sets:\beweislabel]
                V_{\eps;K,F}(\tilde{T}) &\colonequals
                    &{\displaystyle\bigcap_{e,e'\in F}}
                    \{
                        T\in\RaumX
                        \mid
                            {\displaystyle\sup_{t\in K}}
                                |\BRAKET{(T(t)-\tilde{T}(t))e}{e'}|
                            < \eps
                    \},\\
                W_{\eps;K,F,e,e'} &\colonequals &\{T\in\RaumX \mid d_{K,F,e,e'}(T)<\eps\}\\
            \end{mathe}

        \continueparagraph
        for each
            ${\eps>0}$,
            ${K\in\KmpRm{M}}$,
            ${F\subseteq B}$ finite,
            ${e,e'\in\HilbertRaum}$,
        and ${\tilde{T}\in\RaumX}$.
        We can now present our strategy for the rest of the proof:
        To show that $\SpHs(M)$ is a $G_{\delta}$-subset of $\RaumX$,
        it suffices to show
            (I) that the $W$-sets defined in \eqcref{eq:sets:\beweislabel} are open
            and
            (II) that

            \begin{mathe}[tc]{rcccl}
            \eqtag[eq:G-delta-expression:\beweislabel]
                \SpHs(M) &\subseteq
                    &\displaystyle\bigcap_{\substack{%
                        \eps\in\rtnl^{+},~K\in\compactcover,\\
                        e,e'\in B,\\
                        F_{0}\subseteq B~\text{finite}
                    }}\;
                    \displaystyle\bigcup_{\substack{%
                        F\subseteq B~\text{finite}\\
                        \text{s.t.}~F\supseteq F_{0}
                    }}\;
                        W_{\eps;K,F,e,e'}
                    &\subseteq
                    &\SpHw(M).\\
            \end{mathe}

        \continueparagraph
        If these two statements hold, then by assumption of $M$ being \usesinglequotes{good},
        (I) + (II) will yield that $\SpHs(M)=\SpHw(M)$ are equal to a $G_{\delta}$-subset of $\RaumX$,
        which will complete the proof.

        \null

        Towards (I), fix arbitrary
            ${\eps>0}$,
            ${K\in\KmpRm{M}}$,
            ${F\subseteq B}$ finite,
            and
            ${e,e'\in B}$,
        and consider an arbitrary element, ${\tilde{T}\in W_{\eps;K,F,e,e'}}$.
        We need to show that $\tilde{T}$ is in the interior of $W_{\eps;K,F,e,e'}$.
        By continuity of multiplication in the topological monoid, $M$, the set
            ${K\cdot K=\{st\mid s,t\in K\}}$
        is compact.
        Setting
            ${K'\colonequals K\cup (K\cdot K)}$ and ${F'\colonequals F\cup\{e,e'\}}$,
        it suffices to show that

            \begin{mathe}[tc]{c}
            \eqtag[eq:openness-of-W:\beweislabel]
                V_{\eps';K',F'}(\tilde{T}) \subseteq W_{\eps;K,F,e,e'}\\
            \end{mathe}

        \continueparagraph
        holds for some ${\eps'>0}$, since clearly $\tilde{T}$ is an element of the left hand side
        and by definition of the $\toplocWOT$-topology, the $V$-sets are clearly open.

        We determine $\eps'$ as follows.
        First note that by virtue of $\tilde{T}$ being in $W_{\eps;K,F,e,e'}$

            \begin{mathe}[tc]{c}
            \eqtag[eq:modified-eps:\beweislabel]
                r \colonequals \eps - d_{K,F,e,e'}(\tilde{T}) > 0\\
            \end{mathe}

        \continueparagraph
        holds.
        Since the unit disc, ${\uDisc=\{z\in\kmplx\mid |z|\leq 1\}}$,
        is compact, the map
            ${(a,b)\in\uDisc^{2}\mapsto ab\in\kmplx}$
        is uniformly continuous, and hence some ${\eps'>0}$ exists,
        such that

            \begin{mathe}[tc]{c}
            \eqtag[eq:uniform-cts-func:\beweislabel]
                |a'b'-ab| < \frac{r}{4|F|+1}\\
            \end{mathe}

        \continueparagraph
        for all ${a,b,a',b'\in\uDisc}$
        with ${|a-a'|<\eps'}$ and ${|b-b'|<\eps'}$.
        We may also assume without loss of generality that $\eps'<\frac{r}{4}$.
        With this $\eps'$-value, the left hand side of \eqcref{eq:openness-of-W:\beweislabel} is now determined.
        It remains to show that the inclusion holds.

        Since the elements in $\RaumX$ are all contraction-valued functions
        and the ONB, $B$, consists of unit vectors,
        it holds that
            ${\BRAKET{T(t)\xi}{\eta}\in\uDisc}$
            for all
                $T\in\RaumX$,
                ${t\in M}$,
                and
                ${\xi,\eta\in B}$.
        Now consider an arbitrary $T$ in the left hand side of \eqcref{eq:openness-of-W:\beweislabel}.
        Let $s,t\in K$ be arbitrary.
        Then $s,t,st\in K'$,
        so that by the choice of $F'$
        and by virtue of $T$ being inside $V_{\eps';K',F'}(\tilde{T})$,
        we have

            \begin{mathe}[mc]{rcl}
                |\BRAKET{T(s)e''}{e'}-\BRAKET{\tilde{T}(s)e''}{e'}| &< &\eps',\\
                |\BRAKET{T(t)e}{e''}-\BRAKET{\tilde{T}(t)e}{e''}| &< &\eps',\\
                |\BRAKET{T(st)e}{e'}-\BRAKET{\tilde{T}(st)e}{e'}| &< &\eps'\\
            \end{mathe}

        \continueparagraph
        for all $e''\in F$.
        Since $F$ is an orthonormal collection,
        the choice of $\eps'$ and \eqcref{eq:uniform-cts-func:\beweislabel}
        yield

            \begin{longmathe}[mc]{RCL}
                \begin{array}[b]{0l}
                    |\BRAKET{(T(s)\pi_{F}T(t)-T(st))e}{e'}\\
                    \;-\BRAKET{(\tilde{T}(s)\pi_{F}\tilde{T}(t)-\tilde{T}(st))e}{e'}|\\
                \end{array}
                &\leq &{\displaystyle\sum_{e''\in F}}
                    |
                        \BRAKET{T(s)e''}{e'}
                        \BRAKET{T(t)e}{e''}
                        -
                        \BRAKET{\tilde{T}(s)e''}{e'}
                        \BRAKET{\tilde{T}(t)e}{e''}
                    |\\
                    &&\quad +\;|\BRAKET{T(st)e}{e'}-\BRAKET{\tilde{T}(st)e}{e'}|\\
                &\eqcrefoverset{eq:uniform-cts-func:\beweislabel}{<}
                    &{\displaystyle\sum_{e''\in F}}\frac{r}{4|F|+1}\;+\;\eps'
                <\frac{r|F|}{4|F|+1} + \frac{r}{4}
                <\frac{r}{2}\\
            \end{longmathe}

        \continueparagraph
        for all $s,t\in K$. Thus

            \begin{longmathe}[mc]{RCL}
                d_{K,F,e,e'}(T)
                &= &{\displaystyle\sup_{s,t\in K}}|\BRAKET{(T(s)\pi_{F}T(t)-T(st))e}{e'}|\\
                &\leq &{\displaystyle\sup_{s,t\in K}}|\BRAKET{(\tilde{T}(s)\pi_{F}\tilde{T}(t)-\tilde{T}(st))e}{e'}|
                    +\frac{r}{2}\\
                &= &d_{K,F,e,e'}(\tilde{T})+\frac{r}{2}\\
                &\eqcrefoverset{eq:modified-eps:\beweislabel}{=}
                    &\eps-r+\frac{r}{2}
                <
                    \eps,\\
            \end{longmathe}

        \continueparagraph
        whence ${T\in W_{\eps;K,F,e,e'}}$.
        Hence the inclusion in \eqcref{eq:openness-of-W:\beweislabel} holds, as desired.

        \null

        To prove (II), consider the first inclusion of \eqcref{eq:G-delta-expression:\beweislabel}.
        Let ${T\in\SpHs(M)}$ be arbitrary.
        To show that $T$ is in the $G_{\delta}$-set in the middle of \eqcref{eq:G-delta-expression:\beweislabel},
        consider arbitrary fixed
            ${\eps>0}$,
            ${K\in\KmpRm{M}}$,
            ${F_{0}\subseteq B}$ finite,
            and
            ${e,e'\in B}$.
        Our goal is to find some finite ${F\subseteq B}$ with ${F\supseteq F_{0}}$,
        such that $T \in W_{\eps;K,F,e,e'}$.

        To this end,
        we rely on the fact that $T$ is a contractive semigroup
        and observe that for all finite $F\subseteq B$
        the functions

            \begin{mathe}[mc]{rcccl}
                \tilde{f}_{F} &: &K &\to &\realsNonNeg\\
                    &&t &\mapsto &\|(\onematrix-\pi_{F})T(t)e\|,\\
                \\
                f_{F} &: &K\times K &\to &\realsNonNeg\\
                    &&(s,t) &\mapsto &|\BRAKET{(T(s)\pi_{F}T(t)-T(st))e}{e'}|\\
            \end{mathe}

        \continueparagraph
        satisfy

            \begin{mathe}[mc]{rclcccl}
                f_{F}(s,t)
                &= &|\BRAKET{(T(s)\pi_{F}T(t)-T(s)T(t))e}{e'}|\\
                &= &|\BRAKET{(\onematrix-\pi_{F})T(t)e}{T(s)^{\ast}e'}|
                &\leq &\|T(s)^{\ast}e'\|\tilde{f}_{F}(t)
                &\leq &\tilde{f}_{F}(t)\\
            \end{mathe}

        \continueparagraph
        for all $s,t\in K$.
        Furthermore, the $\topSOT$-continuity of $T$ guarantees that $\tilde{f}_{F}$ is continuous.
        Now consider the net $(\tilde{f}_{F})_{F}$,
        where the indices run over all finite $F\subseteq B$, ordered by inclusion.
        Note that the correspondingly indexed net of projections, $(\pi_{F})_{F}$, is monotone,
        and, since ${\bigcup_{F\subseteq B~\text{finite}}F=B}$ and $B$ is a basis for $\HilbertRaum$,
        it holds that ${\pi_{F}\underset{F}{\longrightarrow}\onematrix}$ weakly (in fact strongly).
        Clearly then

            \begin{mathe}[mc]{c}
            \eqtag[eq:pt-wise-convergence:\beweislabel]
                \tilde{f}_{F} \underset{F}{\longrightarrow} 0\\
            \end{mathe}

        \continueparagraph
        pointwise and monotone. Since $K$ is compact and the $\tilde{f}_{F}$ are continuous for all $F$,
        by Dini's Theorem (\cf
            \cite[Theorem~2.66]{aliprantis2005}%
        ) the monotone pointwise convergence in
            \eqcref{eq:pt-wise-convergence:\beweislabel}
        is in fact uniform convergence.
        Hence, by the definition of the net,
        for some finite $F\subseteq B$ with $F\supseteq F_{0}$

            \begin{mathe}[mc]{c}
                d_{K,F,e,e'}(T)
                    = \sup_{s,t\in K}f_{F}(s,t)
                    \leq \sup_{t\in K}\tilde{f}_{F}(t)
                    < \eps,\\
            \end{mathe}

        \continueparagraph
        and thereby ${T\in W_{\eps;K,F,e,e'}}$.

        \null

        To complete the proof of (II), we treat the second inclusion in \eqcref{eq:G-delta-expression:\beweislabel}.
        So let ${T\in\RaumX}$ be an arbitrary element in the $G_{\delta}$-set in the middle of \eqcref{eq:G-delta-expression:\beweislabel}.
        To show that ${T\in\SpHw(M)}$, it is necessary and sufficient
        to show that ${T(st)=T(s)T(t)}$ for all ${s,t\in M}$.
        So fix arbitrary ${s,t\in M}$.
        It suffices to show that
            $\BRAKET{(T(s)T(t)-T(st))e}{e'}=0$
        for all basis vectors, ${e,e'\in B}$.
        So fix arbitrary $e,e'\in B$.

        Note that since $\compactcover$ covers $M$ and is closed under finite unions,
        there exists some ${K\in\compactcover}$, such that ${s,t\in K}$.
        Fix this compact set.
        Now consider the net
            $(d_{K,F,e,e'}(T))_{F}$,
        whose indices run over all finite ${F\subseteq B}$, ordered by inclusion.
        Since $T$ is in the set in the middle of \eqcref{eq:G-delta-expression:\beweislabel},
        working through the definitions yields

            \begin{mathe}[mc]{c}
                \forall{\eps\in\rtnl^{+}:~}
                \forall{F_{0}\subseteq B~\text{finite}:~}
                    \,\exists{F\subseteq B~\text{finite},~\text{s.t.}~F\supseteq F_{0}:~}
                        \,d_{K,F,e,e'}(T)<\eps,\\
            \end{mathe}

        \continueparagraph
        which is clearly equivalent to

            \begin{mathe}[mc]{c}
                \eqtag[eq:liminf:\beweislabel]\relax
                {\displaystyle\liminf_{F}}\,d_{K,F,e,e'}(T) = 0.\\
            \end{mathe}

        Now, since $s,t\in K$, it holds that

            \begin{mathe}[mc]{c}
                |\BRAKET{(T(s)T(t)-T(st))e}{e'}|
                \leq
                    \underbrace{
                        |\BRAKET{T(s)(\onematrix-\pi_{F})T(t)e}{e'}|
                    }_{=:d'_{F}}
                    +\underbrace{
                        |\BRAKET{(T(s)\pi_{F}T(t)-T(st))e}{e'}|
                    }_{\leq d_{K,F,e,e'}(T)}\\
            \end{mathe}

        \continueparagraph
        for all finite $F\subseteq B$.
        Since ${\pi_{F}\underset{F}{\longrightarrow}\onematrix}$ weakly (see above),
        we have ${d'_{F}\underset{F}{\longrightarrow}0}$.
        Noting \eqcref{eq:liminf:\beweislabel},
        taking the limit inferior of the right hand side of the above expression
        thus yields ${\BRAKET{(T(s)T(t)-T(st))e}{e'}=0}$.
        Since ${e,e'\in B}$ were arbitrarily chosen, and $B$ is a basis for $\HilbertRaum$,
        it follows that ${T(st)=T(s)T(t)}$.
        This completes the proof.
    \end{proof}

\begin{rem}
    The proof of \Cref{thm:main-result:sig:article-str-problem-raj-dahya} reveals that in fact claims (I) and (II) hold,
    provided the topological monoid $M$ is at least $\sigma$-compact.
    And if $M$ is furthermore \usesinglequotes{good}, then these again imply that that $\SpHs(M)$ is a $G_{\delta}$-subset in $(\SpCw(M),\toplocWOT)$.
    The stronger assumption of $M$ being locally compact is only relied upon
    to obtain that $(\SpCw(M),\toplocWOT)$ is itself completely metrisable
    (\cf the proof of \Cref{prop:basic:SpC:basic-polish:sig:article-str-problem-raj-dahya}),
    and thus via Alexandroff's lemma
    that $G_{\delta}$-subsets of this space are completely metrisable.
\end{rem}

Finally, by \Cref{cor:multiparameter-auto-cts:sig:article-str-problem-raj-dahya} we obtain:

\begin{cor}
    The spaces of one-/multiparameter contractive $\C0$-semigroups on a separable Hilbert space,
    viewed under the topology of uniform $\topWOT$-convergence on compact subsets,
    are Polish.
\end{cor}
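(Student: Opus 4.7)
The plan is to recognise the stated spaces as instances of $(\SpHs(M), \toplocWOT)$ where $M$ satisfies the hypotheses of \Cref{thm:main-result:sig:article-str-problem-raj-dahya}, and then invoke that theorem. Concretely, for each $d \in \ntrlpos$ I set $M \colonequals \realsNonNeg^{d}$ equipped with pointwise addition and the product topology, so that $d=1$ captures the one-parameter case and $d \geq 2$ the multiparameter case.

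Two conditions on $M$ need to be verified. First, $M$ must be a locally compact Polish monoid: $\realsNonNeg$ is a closed subspace of $\reals$ and hence locally compact Polish, and finite products preserve both local compactness and the Polish property, while pointwise addition is jointly continuous and has $\zerovector$ as a neutral element. Second, $M$ must be \usesinglequotes{good} in the sense of \Cref{defn:good-monoids:sig:article-str-problem-raj-dahya}; this is precisely the content of \Cref{cor:multiparameter-auto-cts:sig:article-str-problem-raj-dahya}, itself a consequence of the classical automatic $\topSOT$-continuity result for one-parameter $\topWOT$-continuous contractive $\C0$-semigroups together with the product-stability from \Cref{prop:good-monoids-closed-under-products:sig:article-str-problem-raj-dahya}.

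With both hypotheses verified, \Cref{thm:main-result:sig:article-str-problem-raj-dahya} applies directly and yields that $(\SpHs(\realsNonNeg^{d}), \toplocWOT)$ is Polish for every $d \in \ntrlpos$, which is the assertion. No substantive obstacle is anticipated, since the technical core — the Borel-complexity analysis classifying $\SpHs(M)$ as a $G_{\delta}$-subset of the Polish space $(\SpCw(M), \toplocWOT)$, together with Alexandroff's lemma — has already been carried out in the main theorem; this corollary merely specialises that general result to the monoids of classical interest.
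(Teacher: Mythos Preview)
Your proposal is correct and follows exactly the paper's approach: the paper derives this corollary in a single line by invoking \Cref{cor:multiparameter-auto-cts:sig:article-str-problem-raj-dahya} (that $\realsNonNeg^{d}$ is \usesinglequotes{good}) and applying \Cref{thm:main-result:sig:article-str-problem-raj-dahya}. Your additional remark that $\realsNonNeg^{d}$ is locally compact Polish is implicit in the paper but worth making explicit.
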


This positively solves the open problem raised in
    \cite[\S{}III.6.3]{eisner2010buchStableOpAndSemigroups}.



\setcounternach{section}{1}
\appendix



\section[Strong continuity of operator semigroups]{Strong continuity of operator semigroups}
\label{app:continuity}

\firstparagraph
The main result of this paper
is proved for semigroups defined on \usesinglequotes{good} monoids
(see \Cref{defn:good-monoids:sig:article-str-problem-raj-dahya}).
By a well-known result, any $\topWOT$-continuous semigroup
    on a Banach space $\BanachRaum$
    over the monoid $(\realsNonNeg,+,0)$
is automatically $\topSOT$-continuous,
(\cf \cite[Theorem~5.8]{Engel1999})
and thus $\realsNonNeg$ is by our definition a \usesinglequotes{good} monoid.

In this appendix, we provide sufficient conditions for topological monoids
to possess this property, and thus broaden application of the main result.
These conditions are given as follows:


\begin{defn}
\makelabel{app:continuity:defn:conditition-I:sig:article-str-problem-raj-dahya}
    A topological monoid, $M$, shall be called \highlightTerm{extendible},
    if there exists a locally compact Hausdorff topological group, $G$,
    such that $M$ is topologically and algebraically isomorphic to a closed subset of $G$.
\end{defn}

If $M$ is extendible to $G$ via the above definition,
then one can assume \withoutlog that $M \subseteq G$.

\begin{defn}
\makelabel{app:continuity:defn:conditition-II:sig:article-str-problem-raj-dahya}
    Let $G$ be a locally compact Hausdorff group.
    Call a subset $A \subseteq G$ \highlightTerm{positive in the identity},
    if for all neighbourhoods, $U \subseteq G$, of the group identity,
    $U \cap A$ has non-empty interior within $G$.
\end{defn}



\begin{e.g.}[The non-negative reals]
\makelabel{app:continuity:e.g.:extendible-mon:reals:sig:article-str-problem-raj-dahya}
    Consider ${M \colonequals \realsNonNeg}$ viewed under addition.
    Since ${M\subseteq\reals}$ is closed,
    we have that $M$ is an extendible locally compact Hausdorff monoid.
    For any open neighbourhood, ${U\subseteq\reals}$, of the identity,
    there exists an ${\eps>0}$, such that ${(-\eps,\eps)\subseteq U}$
    and thus ${U\cap M\supseteq(0,\eps)\neq\leer}$.
    Hence $M$ is positive in the identity.
\end{e.g.}

\begin{e.g.}[The $p$-adic integers]
\makelabel{app:continuity:e.g.:extendible-mon:p-adic:sig:article-str-problem-raj-dahya}
    Consider ${M \colonequals \mathbb{Z}_{p}}$ with ${p\in\mathbb{P}}$,
    viewed under addition and with the topology generated by the $p$-adic norm.
    Since ${M\subseteq\rtnl_{p}}$ is clopen,
    it is an extendible locally compact Hausdorff monoid.
    Since $M$ is clopen, it is clearly positive in the identity.
\end{e.g.}

\begin{e.g.}[Discrete cases]
\makelabel{app:continuity:e.g.:extendible-mon:discrete:sig:article-str-problem-raj-dahya}
    Let $G$ be a discrete group, and let $M\subseteq G$ contain the identity and be closed under group multiplication.
    Clearly, $M$ is a locally compact Hausdorff monoid, extendible to $G$ and positive in the identity.
    For example one can take the free-group $\mathbb{F}_{2}$ with generators $\{a,b\}$,
    and $M$ to be the algebraic closure of $\{1,a,b\}$ under multiplication.
\end{e.g.}

\begin{e.g.}[Non-discrete, non-commutative cases]
\makelabel{app:continuity:e.g.:extendible-mon:non-comm-non-discrete:sig:article-str-problem-raj-dahya}
    Let $d\in\ntrlpos$ with $d>1$ and consider the space, $\RaumX$, of $\reals$-valued $d\times d$ matrices.
    Topologised with any matrix norm (equivalently the strong or the weak operator topologies),
    this space is homeomorphic to $\reals^{d^{2}}$ and thus locally compact Hausdorff.
    Since the determinant map ${X\ni T\mapsto \det(T)\in\reals}$ is continuous,
    the subspace of invertible matrices $\{T\in\RaumX\mid \det(T)\neq 0\}$
    is open and thus a locally compact Hausdorff topological group.
    The subspace, $G$, of upper triangular matrices with positive diagonal entries,
    is a closed subgroup and thus locally compact Hausdorff.
    Letting

        \begin{mathe}[mc]{rcl}
            G_{0} &\colonequals &\{T\in G\mid \det(T)=1\},\\
            G_{+} &\colonequals &\{T\in G\mid \det(T)>1\},~\text{and}\\
            G_{-} &\colonequals &\{T\in G\mid \det(T)<1\},\\
        \end{mathe}

    \continueparagraph
    it is easy to see that $M \colonequals G_{0}\cup G_{+}$ is a topologically closed subspace containing the identity and is closed under multiplication.
    Moreover $M$ is a proper monoid, since the inverses of the elements in $G_{+}$ are clearly in ${G\ohne M}$.
    Consider now an open neighbourhood, ${U\subseteq G}$, of the identity.
    Since inversion is continuous, $U^{-1}$ is also an open neighbourhood of the identity.
    Since, as a locally compact Hausdorff space, $G$ satisfies the Baire category theorem,
    and since ${G_{+}\cup G_{-}}$ is clearly dense (and open) in $G$, and thus comeagre,
    we clearly have $(U\cap U^{-1})\cap(G_{+}\cup G_{-})\neq\leer$.
    So either ${U\cap G_{+}\neq\leer}$ or else ${U^{-1}\cap G_{-}\neq\leer}$,
    from which it follows that ${U\cap G_{+}=(U^{-1}\cap G_{-})^{-1}\neq\leer}$.
    Hence in each case ${U\cap M}$ contains a non-empty open subset, \viz ${U\cap G_{+}}$.
    So $M$ is extendible to $G$ and positive in the identity.

    Next, consider the subgroup, ${\ExampleGroupHeisenberg \subseteq G}$,
    consisting of matrices of the form $T = \onematrix + \tilde{T}$,
        where $\tilde{T}$ is a strictly upper triangular matrix
        with at most non-zero entries on the top row and right hand column.
    That is, $\ExampleGroupHeisenberg$ is the \highlightTerm{continuous Heisenberg group}, $\Heisenberg{2d-3}(\reals)$, of order ${2d-3}$.
    The elements of the Heisenberg group occur in the study of Kirillov's \highlightTerm{orbit method}
        (see \cite{kirillov1962})
    and have important applications in physics
        (see \exempli \cite{kirillov2003}).
    Clearly, $\ExampleGroupHeisenberg$ is topologically closed within $G$ and thus locally compact Hausdorff.
    Now consider the subspace,

        \begin{mathe}[mc]{c}
            \ExampleMonoidHeisenberg \colonequals \{T\in \ExampleGroupHeisenberg \mid \forall{i,j\in\{1,2,\ldots,d\}:~}T_{ij}\geq 0\},\\
        \end{mathe}

    \continueparagraph
    of matrices with only non-negative entries.
    This is clearly a topologically closed subspace of $\ExampleGroupHeisenberg$ containing the identity and closed under multiplication.
    Moreover, for each ${S,T \in \ExampleMonoidHeisenberg\ohne\{\onematrix\}}$ we have

        \begin{mathe}[mc]{c}
            ST = \onematrix + \big((S-\onematrix) + (T-\onematrix) + (S-\onematrix)(T-\onematrix)\big)
              \in \ExampleMonoidHeisenberg \ohne \{\onematrix\},\\
        \end{mathe}

    \continueparagraph
    which implies that no non-trivial element in $\ExampleMonoidHeisenberg$ has its inverse in $\ExampleMonoidHeisenberg$, making $\ExampleMonoidHeisenberg$ a proper monoid.
    Consider now an open neighbourhood, ${U \subseteq \ExampleGroupHeisenberg}$, of the identity.
    Since $\ExampleGroupHeisenberg$ is homeomorphic to $\reals^{2d-3}$,
    there exists some ${\eps>0}$, such that

        \begin{mathe}[mc]{c}
            U \supseteq \{
                T \in \ExampleGroupHeisenberg
                \mid
                \forall{(i,j)\in\mathcal{I}:~}T_{ij}\in(-\eps,\eps)
            \},\\
        \end{mathe}

    \continueparagraph
    where $\mathcal{I} \colonequals \{(1,2),(1,3),\ldots,(1,d),(2,d),\ldots,(d-1,d)\}$.
    Hence

        \begin{mathe}[mc]{rcl}
            U \cap \ExampleMonoidHeisenberg &\supseteq &\{
                T \in \ExampleGroupHeisenberg
                \mid
                \forall{(i,j)\in\mathcal{I}:~}T_{ij}\in(0,\eps)
            \} \eqcolon V,\\
        \end{mathe}

    \continueparagraph
    where $V$ is clearly a non-empty open subset of $\ExampleGroupHeisenberg$,
    since the $2d-3$ entries in the matrices can be freely and independently chosen.
    Thus $\ExampleMonoidHeisenberg$ is extendible to $\ExampleGroupHeisenberg$ and positive in the identity.

    Finally, we may consider the subgroup, ${\ExampleGroupUTunit \colonequals \UpperTrUnit(d)}$, of upper triangular matrices over $\reals$ with unit diagonal.
    The elements of $\UpperTrUnit(d)$ have important applications in image analysis
    (see \exempli
        \cite{kirillov2003}
        and
        \cite[\S{}5.5.2]{pennec2020}%
    )
    and representations of the group have been studied in
        \cite[Chapter~6]{samoilenko1991}.
    Setting ${\ExampleMonoidUT \colonequals \{T \in \ExampleGroupUTunit\mid \forall{i,j\in\{1,2,\ldots,d\}:~}T_{ij}\geq 0\}}$,
    one may argue similarly to the case of the continuous Heisenberg group
    and obtain that $\ExampleGroupUTunit$ is locally compact
    and that $\ExampleMonoidUT$ is a proper topological monoid
    which is furthermore extendible to $\ExampleGroupUTunit$ and positive in the identity.
\end{e.g.}

The following result allows us to generate infinitely many examples from basic ones:

\begin{prop}
\makelabel{app:continuity:prop:monoids-with-cond-closed-under-fprod:sig:article-str-problem-raj-dahya}
    Let ${n\in\ntrlpos}$ and let $M_{i}$ be locally compact Hausdorff monoids for ${1\leq i\leq n}$.
    Assume for each ${i<n}$ that $M_{i}$ is extendible to a locally compact Hausdorff group $G_{i}$,
    and that $M_{i}$ is positive in the identity of $G_{i}$.
    Then ${M \colonequals \prod_{i=1}^{n}M_{i}}$ is a locally compact Hausdorff monoid
    which is extendible to ${G \colonequals \prod_{i=1}^{n}G_{i}}$
    and positive in the identity.
\end{prop}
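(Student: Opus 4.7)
The plan is straightforward: verify each of the three claims in turn, using only standard properties of product topologies together with the positivity hypothesis on the factors. Throughout, I interpret the indexing ``for each $i<n$'' as a typo for ``for each $i\leq n$'', which is forced by the conclusion referring to $G=\prod_{i=1}^{n}G_{i}$.

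First I would handle the locally compact Hausdorff assertion for $M$ and the fact that $M$ is topologically and algebraically isomorphic to a closed subset of $G$. A finite product of locally compact Hausdorff spaces is locally compact Hausdorff, so $G$ is locally compact Hausdorff. Since each $M_{i}$ is closed in $G_{i}$ and the product topology on $G$ has $\prod_{i=1}^{n}M_{i}$ as a closed subset (products of closed sets being closed in the product topology), $M$ is closed in $G$. Further, identifying each $M_{i}$ with its image in $G_{i}$ makes $M$ a topological submonoid of $G$ in the sense of the paper, since coordinatewise multiplication preserves the submonoid structure and contains the identity.

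Next I would establish positivity of $M$ in the identity of $G$. Fix an arbitrary open neighbourhood $U\subseteq G$ of the group identity $1=(1,1,\ldots,1)$. By the definition of the product topology, there exists a basic open neighbourhood $\prod_{i=1}^{n}U_{i}\subseteq U$ where each $U_{i}\subseteq G_{i}$ is an open neighbourhood of the identity of $G_{i}$. By the assumed positivity of $M_{i}$ in the identity of $G_{i}$, for each $i$ the intersection $U_{i}\cap M_{i}$ has non-empty interior in $G_{i}$; pick a non-empty open $V_{i}\subseteq U_{i}\cap M_{i}$. Then $\prod_{i=1}^{n}V_{i}$ is a non-empty open subset of $G$ contained in $U\cap M$, so $U\cap M$ has non-empty interior in $G$, as required.

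There is no real obstacle here: the argument is a direct unfolding of the product topology together with two elementary facts (a finite product of closed sets is closed, and a finite product of non-empty opens is a basic open). The only point to watch is the quantifier structure in the definition of positivity in the identity, namely that one must produce a non-empty open subset of $G$ (not merely of $M$) inside $U\cap M$; this is precisely what $\prod_{i=1}^{n}V_{i}$ delivers, since each $V_{i}$ was chosen open in $G_{i}$.
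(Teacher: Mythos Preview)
Your proof is correct and follows essentially the same route as the paper's: both arguments reduce an arbitrary neighbourhood $U$ of the identity in $G$ to a basic open box $\prod_{i=1}^{n}U_{i}$, apply positivity in each factor to obtain non-empty opens $V_{i}\subseteq U_{i}\cap M_{i}$, and conclude that $\prod_{i=1}^{n}V_{i}$ is a non-empty open subset of $G$ contained in $U\cap M$. Your additional remarks spelling out the extendibility and your observation about the ``$i<n$'' typo are both apt; the paper simply declares extendibility ``clear'' and proceeds.
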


    \begin{proof}
        The extendibility of $M$ to $G$ is clear.
        Now consider an arbitrary open neighbourhood, $U$, of the identity in $G$.
        For each ${1\leq i\leq n}$, one can find open neighbourhoods, $U_{i}$, of the identity in $G_{i}$,
        so that ${U' \colonequals \prod_{i=1}^{n}U_{i}\subseteq U}$.
        By assumption, $M_{i}\cap U_{i}$ contains a non-empty open set,
        ${V_{i}\subseteq G_{i}}$ for each ${1\leq i\leq n}$.
        Since ${U\supseteq\prod_{i=1}^{n}U_{i}}$,
        it follows that
            $M\cap U
                \supseteq
                \prod_{i=1}^{n}(M_{i}\cap U_{i})
                \supseteq
                \prod_{i=1}^{n}V_{i}\neq\leer$.
        Thus ${M\cap U}$ has non-empty interior.
        Hence $M$ is positive in the identity.
    \end{proof}



Our argumentation for the generalised continuity result
builds on
    \cite[Theorem~5.8]{Engel1999}.

\begin{schattierteboxdunn}[%
    backgroundcolor=leer,%
    nobreak=true,%
]
\begin{thm}
\makelabel{app:continuity:thm:generalised-auto-continuity:sig:article-str-problem-raj-dahya}
    Let $M$ be a topological monoid
    such that
        $M$ is extendible to a locally compact Hausdorff group $G$
        and such that
        $M$ is positive in the identity.
    Then for any Banach space, $\BanachRaum$,
    every $\topWOT$-continuous semigroup, ${T:M\to\BoundedOps{\BanachRaum}}$,
    is $\topSOT$-continuous.
    In particular, $M$ is \usesinglequotes{good}.
\end{thm}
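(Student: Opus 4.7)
The plan is to adapt the classical proof for $(\realsNonNeg, +, 0)$ (see \cite[Theorem~5.8]{Engel1999}) by replacing Lebesgue averages with Bochner averages against a left Haar measure $\mu$ on the ambient group $G$. Extendibility gives access to $\mu$; positivity in the identity ensures that for every neighborhood $U \subseteq G$ of $1$, the intersection $U \cap M$ contains a non-empty $G$-open subset $V$, which automatically satisfies $\mu(V) > 0$.

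First I would establish local boundedness of $T$. Fix a compact neighborhood $K_{0}$ of $1$ in $G$; for any compact $K' \subseteq M$ and any $\xi \in \BanachRaum$, $\eta \in \BanachRaum^{\ast}$, the scalar map $t \mapsto \BRAKET{T(t)\xi}{\eta}$ is $\topWOT$-continuous hence bounded on $K'$, so two applications of the Banach-Steinhaus theorem yield $C_{K'} \colonequals \sup_{t \in K'}\|T(t)\| < \infty$. Next, for each $G$-open subset $V \subseteq K_{0} \cap M$ with $\mu(V) > 0$ and each $\xi \in \BanachRaum$, I would form the Bochner integral $x_{V,\xi} \colonequals \frac{1}{\mu(V)}\int_{V} T(s)\xi \, d\mu(s)$. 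Weak continuity together with separability supply strong measurability via Pettis's theorem, and norm-domination by $C_{K_{0}}\|\xi\|$ yields Bochner integrability. Dominated convergence applied to the scalar orbits then shows $x_{V,\xi} \to \xi$ weakly as $V$ shrinks to $\{1\}$ along the neighborhood basis furnished by the positivity hypothesis.

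Second, I would prove norm-continuity of the auxiliary orbits $t \mapsto T(t)\,x_{V,\xi}$ on all of $M$. Using the semigroup law and left-invariance of $\mu$, one obtains $T(t)\,x_{V,\xi} = \frac{1}{\mu(V)}\int_{tV} T(u)\xi \, d\mu(u)$ for $t \in M$, so for $t_{n} \to t_{0}$ in $M$ the norm difference is bounded by a constant multiple of $\mu(t_{n}V \symmdiff t_{0}V)$, which tends to $0$ by the classical $L^{1}(G)$-continuity of left translation applied to the indicator of $V$. To lift this to arbitrary $\xi$, I invoke Mazur's theorem: since $\xi$ lies in the weak closure of $\{x_{V,\xi}\}_{V}$, it also lies in the norm closure of their convex hull, so given $\eps > 0$ I may fix a convex combination $y = \sum_{i}\alpha_{i} x_{V_{i},\xi}$ with $\|y - \xi\| < \eps$; by linearity $t \mapsto T(t)y$ is then norm-continuous on $M$.

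A standard $3\eps$-argument closes the loop: for $t_{n} \to t_{0}$ in $M$, eventually $t_{n}$ lies in a compact neighborhood of $t_{0}$ on which $\|T(\cdot)\|$ is uniformly bounded by some $C'$, so $\|T(t_{n})\xi - T(t_{0})\xi\| \leq (C' + \|T(t_{0})\|)\,\eps + \|T(t_{n})y - T(t_{0})y\|$, and taking $\limsup_{n}$ followed by $\eps \to 0$ yields $\topSOT$-continuity of $T$ at $t_{0}$. I expect the main obstacle to lie in the translation-invariance step: because $M$ is not a group one cannot naively ``factor $t_{n}$ as $t_{0}$ times a small element of $M$'', but the identity $T(t)x_{V,\xi} = \frac{1}{\mu(V)}\int_{tV} T(u)\xi \, d\mu(u)$ keeps the integrand inside $M$ throughout, while the translation continuity $\mu(t_{n}V \symmdiff t_{0}V) \to 0$ is a purely $G$-level statement cleanly supplied by Haar theory.
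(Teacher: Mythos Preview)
Your strategy mirrors the paper's almost exactly: Haar averages on the ambient group, Mazur's theorem to pass from weak to norm density, and $L^{1}$-continuity of left translation on $G$ for the norm estimate on the averaged orbits. The one substantive deviation is that you form \emph{Bochner} integrals $x_{V,\xi}=\frac{1}{\mu(V)}\int_{V}T(s)\xi\,\dee\mu(s)$ and justify them via Pettis's measurability theorem, whereas the paper defines $x_{F}\in\BanachRaum^{\prime\prime}$ weakly by $\langle x_{F},\phi\rangle=\frac{1}{\lambda(M\cap F)}\int_{M\cap F}\langle T(t)x,\phi\rangle\,\dee t$ and then argues, exactly as in \cite[Theorem~5.8]{Engel1999}, that this functional already lies in $\BanachRaum$; the final norm estimate is likewise obtained by bounding $|\langle T(t')x_{F}-T(t)x_{F},\phi\rangle|$ uniformly over unit functionals rather than by a Bochner-norm computation.

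This difference is not cosmetic. Your appeal to Pettis's theorem needs $s\mapsto T(s)\xi$ to be essentially separably valued, and nothing in the hypotheses delivers that: the statement is for an \emph{arbitrary} Banach space $\BanachRaum$ and an \emph{arbitrary} locally compact Hausdorff group $G$, with no second-countability or separability assumed anywhere, and weak continuity alone does not force norm-separable orbits. So the sentence ``weak continuity together with separability supply strong measurability'' is unjustified as written. The fix is exactly the paper's route: work with the weak integral throughout, which bypasses strong measurability entirely. As you have it, the argument is complete only for separable $\BanachRaum$ --- which, admittedly, already covers the concluding claim that $M$ is \usesinglequotes{good}, since that only concerns the fixed separable Hilbert space $\HilbertRaum$.
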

\end{schattierteboxdunn}

(Note that a semigroup over a Banach space $\BanachRaum$ on a topological monoid
is defined analogously to \Cref{defn:semigroup-on-h:sig:article-str-problem-raj-dahya}.)

    \begin{proof}
        First note that the principle of uniform boundedness applied twice to the $\topWOT$-continuous function, $T$,
        ensures that $T$ is norm-bounded on all compact subsets of $M$.
        Fix now a left-invariant Haar measure, $\lambda$, on $G$ and set

            \begin{mathe}[mc]{rcl}
                S &\colonequals &\{F\subseteq G\mid F~\text{a compact neighbourhood of the identity}\}.\\
            \end{mathe}

        Consider arbitrary ${F\in S}$ and ${x \in \BanachRaum}$.
        By the closure of $M$ in $G$ as well as positivity in the identity,
        ${M\cap F}$ is compact and contains a non-empty open subset of $G$.
        It follows that ${0<\lambda(M\cap F)<\infty}$.
        The $\topWOT$-continuity of $T$, the compactness (and thus measurability) of ${M\cap F}$,
        and the norm-boundedness of $T$ on compact subsets
        ensure that

            \begin{mathe}[mc]{rcl}
            \eqtag[eq:defn-xF:\beweislabel]
                \BRAKET{x_{F}}{\phi}
                    &\colonequals &\frac{1}{\lambda(M\cap F)}
                            \displaystyle\int_{t\in M\cap F}
                            \BRAKET{T(t)x}{\phi}
                            ~\dee{}t,
                    \quad\text{for $\phi \in \BanachRaum^{\prime}$}\\
            \end{mathe}

        \continueparagraph
        describes a well-defined element $x_{F} \in \BanachRaum^{\prime\prime}$.
        Exactly as in
            \cite[Theorem~5.8]{Engel1999},
        one may now argue by the $\topWOT$-continuity of $T$ and compactness of ${M\cap F}$
        that in fact ${x_{F} \in \BanachRaum}$ for each ${x \in \BanachRaum}$ and ${F\in S}$.
        Moreover, since $M$ is locally compact,
        and $T$ is $\topWOT$-continuous with $T(1)=\onematrix$,
        one readily obtains that each $x \in \BanachRaum$
        can be weakly approximated by the net,
            $(x_{F})_{F \in S}$,
        ordered by inverse inclusion. So

            \begin{mathe}[mc]{rcl}
                D &\colonequals &\{x_{F}\mid x \in \BanachRaum,~F \in S\}\\
            \end{mathe}

        \continueparagraph
        is weakly dense in $\BanachRaum$.
        Since the weak and strong closures of any convex subset in a Banach space coincide
        (\cf \cite[Theorem~5.98]{aliprantis2005}),
        it follows that the convex hull, $\co(D)$, is strongly dense in $\BanachRaum$.

        Now, to prove the $\topSOT$-continuity of $T$,
        we need to show that

            \begin{mathe}[mc]{rcl}
            \eqtag[eq:map:\beweislabel]
                t\in M &\mapsto &T(t)x \in \BanachRaum\\
            \end{mathe}

        \continueparagraph
        is strongly continuous for all $x \in \BanachRaum$.
        Since $M$ is locally compact and $T$ is norm-bounded on compact subsets of $M$,
        the set of $x \in \BanachRaum$ such that \eqcref{eq:map:\beweislabel} is strongly continuous,
        is itself a strongly closed convex subset of $\BanachRaum$.
        So, since $\co(D)$ is strongly dense in $\BanachRaum$,
        it suffices to prove
        the strong continuity of \eqcref{eq:map:\beweislabel}
        for each ${x\in D}$.

        To this end, fix arbitrary ${x \in \BanachRaum}$, ${F\in S}$ and ${t\in M}$.
        We need to show that ${T(t')x_{F}\longrightarrow T(t)x_{F}}$
        strongly for ${t'\in M}$ as ${t'\longrightarrow t}$.

        First recall, that by basic harmonic analysis,
        the canonical \highlightTerm{left-shift},

            \begin{mathe}[mc]{rcccl}
                L &: &G &\to &\BoundedOps{L^{1}(G)},\\
            \end{mathe}

        \continueparagraph
        defined via ${(L_{t}f)(s)=f(t^{-1}s)}$ for ${s,t\in G}$ and $f\in L^{1}(G)$,
        is an $\topSOT$-continuous morphism
        (\cf
            \cite[Proposition~3.5.6 ($\lambda_{1}$--$\lambda_{4}$)]{reiter2000}%
        ).
        Now, by compactness, ${f \colonequals \einser_{M\cap F}\in L^{1}(G)}$
        and it is easy to see that
            ${\|L_{t'}f-L_{t}f\|_{1}=\lambda(t'(M\cap F)\symmdiff t(M\cap F))}$
        for ${t'\in M}$.
        The $\topSOT$-continuity of $L$ thus yields

            \begin{mathe}[mc]{rcl}
            \eqtag[eq:1:\beweislabel]
                \lambda(t'(M\cap F) \symmdiff t(M\cap F)) &\longrightarrow &0\\
            \end{mathe}

        \continueparagraph
        for ${t'\in M}$ as ${t'\longrightarrow t}$.

        Fix now a compact neighbourhood, ${K\subseteq G}$, of $t$.
        For ${t'\in M\cap K}$ and ${\phi \in \BanachRaum^{\prime}}$ one obtains

            \begin{mathe}[mc]{rcl}
                |\BRAKET{T(t')x_{F}-T(t)x_{F}}{\phi}|
                    &= &|\BRAKET{x_{F}}{T(t')^{\ast}\phi}-\BRAKET{x_{F}}{T(t)^{\ast}\phi}|\\
                    &= &\frac{1}{\lambda(M\cap F)}
                        \cdot\left|
                            \displaystyle\int_{s\in M\cap F}\textstyle
                            \BRAKET{T(s)x}{T(t')^{\ast}\phi}~\dee{}s
                        -
                            \displaystyle\int_{s\in M\cap F}\textstyle
                            \BRAKET{T(s)x}{T(t)^{\ast}\phi}~\dee{}s
                        \right|\\
                    &\multispan{2}{\text{by construction of $x_{F}$ in \eqcref{eq:defn-xF:\beweislabel}}}\\
                    &= &\frac{1}{\lambda(M\cap F)}
                        \cdot\left|
                            \displaystyle\int_{s\in M\cap F}\textstyle
                            \BRAKET{T(t's)x}{\phi}~\dee{}s
                        -
                            \displaystyle\int_{s\in M\cap F}\textstyle
                            \BRAKET{T(ts)x}{\phi}~\dee{}s
                        \right|\\
                    &\multispan{2}{\text{since $T$ is a semigroup}}\\
                    &= &\frac{1}{\lambda(M\cap F)}
                        \cdot\left|
                            \displaystyle\int_{s\in t'(M\cap F)}\textstyle
                            \BRAKET{T(s)x}{\phi}~\dee{}s
                        -
                            \displaystyle\int_{s\in t(M\cap F)}\textstyle
                            \BRAKET{T(s)x}{\phi}~\dee{}s
                        \right|\\
                    &\multispan{2}{\text{by left-invariance}}\\
                    &\leq &\frac{1}{\lambda(M\cap F)}
                            \displaystyle\int_{s\in t'(M\cap F)\symmdiff t(M\cap F)}\textstyle
                            |\BRAKET{T(s)x}{\phi}|~\dee{}s\\
                    &\leq &\frac{1}{\lambda(M\cap F)}
                        \cdot\displaystyle\sup_{s\in(M\cap K)(M\cap F)}\textstyle\|T(s)\|
                        \cdot\|x\|\cdot\|\phi\|
                        \cdot\lambda(t'(M\cap F)\symmdiff t(M\cap F))\\
                        &\multispan{2}{\text{since $t,t'\in M\cap K$}.}\\
            \end{mathe}

        Since $K' \colonequals (M\cap K)(M\cap F)$ is compact, and $T$ is uniformly bounded on compact sets (see above),
        it holds that ${C \colonequals \displaystyle\sup_{s\in K'}\|T(s)\|\textstyle<\infty}$.
        The above calculation thus yields

            \begin{mathe}[mc]{rcl}
            \eqtag[eq:2:\beweislabel]
                \|T(t')x_{F}-T(t)x_{F}\|
                &= &\displaystyle\sup\textstyle\{
                        |\BRAKET{T(s)x_{F}-T(t)x_{F}}{\phi}|
                    \mid
                        \phi \in \BanachRaum^{\prime},~\|\phi\|\leq 1
                    \}\\
                &\leq &\frac{1}{\lambda(M\cap F)}
                        \cdot C
                        \cdot\|x\|
                        \cdot\lambda(t'(M\cap F)\symmdiff t(M\cap F))\\
            \end{mathe}

        \continueparagraph
        for all $t'\in M$ sufficiently close to $t$.

        By \eqcref{eq:1:\beweislabel}, the right-hand side of \eqcref{eq:2:\beweislabel} converges to $0$
        and hence ${T(t')x_{F}\longrightarrow T(t)x_{F}}$
        strongly as ${t'\longrightarrow t}$.
        This completes the proof.
    \end{proof}

\begin{rem}
    In the proof of \Cref{app:continuity:thm:generalised-auto-continuity:sig:article-str-problem-raj-dahya},
    weak continuity only played a role in obtaining
    the boundedness of $T$ on compact sets,
    as well as the well-definedness of the elements in $D$.
    In \cite[Theorem~9.3.1 and Theorem~10.2.1--3]{hillephillips2008} a proof of the classical continuity result exists under weaker conditions,
    \viz weak measurability, provided the semigroups are almost separably valued.
    It would be interesting to know whether the above approach can be adapted to these weaker assumptions.
\end{rem}







\subsection*{Acknowledgement}

\firstparagraph
The author is grateful
    to Tanja Eisner for her feedback,
    to Konrad Zimmermann for his helpful comments on the results in the appendix,
and
    to the referee for their constructive feedback.


\bibliographystyle{abbrv}
\def\bibname{References}
\bgroup
\footnotesize

\egroup


\addresseshere
\end{document}
